\newcommand{\N}{\mathbb{N}}
\newcommand{\C}{\mathbb{C}}
\newcommand{\diam}{\text{diam}}
\DeclareMathOperator{\prop}{prop}
\DeclareMathOperator{\id}{id}
\newcommand{\Cenv}{C^*_{\text{env}}}
\newtheorem{theorem}{Theorem}[section]
\newtheorem{proposition}[theorem]{Proposition}
\newtheorem{lemma}[theorem]{Lemma}
\newtheorem{corollary}[theorem]{Corollary}
\theoremstyle{definition}
\newtheorem{definition}[theorem]{Definition}
\theoremstyle{remark}
\newtheorem{remark}[theorem]{Remark}
\title{Operator systems for tolerance relations on finite sets}
\author{Mick Gielen and Walter D. van Suijlekom}
\date{}
\begin{document}
\maketitle

\begin{abstract}
    We study the duals of a certain class of finite-dimensional operator systems, namely the class of operator systems associated to tolerance relations on finite sets or equivalently the class of operator systems that are associated with graphs. In the case where the graphs associated with these operator systems are chordal we are able to find concrete realizations of their duals as sitting inside of finite-dimensional $C^*$-algebras. We then use these concrete realizations to compute the $C^*$-envelopes, propagation numbers and extremal rays of these duals in the chordal case. Finally, we exemplify our results by applying them to operator systems of band matrices.
\end{abstract}

\section{Introduction}
Completely positive maps have long since played an important rule in the study of operator algebras. The appropriate context for the study of such maps is in the setting of operator systems, which were first introduced in \cite{injectivity}. Since their inception, operator systems have found applications outside of operator algebra theory. They appear under the name of ``non-commutative graphs" in the context of quantum information theory in \cite{quantum}. Recent developments in non-commutative geometry also crucially make use of operator systems \cite{spectralTruncations, toleranceRelations}.

Two equivalent notions of operator systems exist. A concrete operator system is a unital self-adjoint subspace of the space $B(H)$ of bounded operators on some Hilbert space $H$. An abstract operator system is a matrix-ordered $*$-vector space with an Archimedean matrix order unit. The appropriate notion of isomorphism for operator systems is called a complete order isomorphism. In \cite{injectivity} it was shown that the two notions of operator systems are equivalent by proving that every abstract operator system is completely order isomorphic to a concrete operator system. In practice, when working with an abstract operator system, it is often convenient to have a concrete realization of this operator system as bounded operators on some Hilbert space. One can define the dual of a (finite-dimensional) operator system abstractly, however it is not always clear how this dual can be concretely realized.

A given (abstract) operator system can be embedded into $C^*$-algebras in different ways. Hamana proved the existence of a minimal $C^*$-algebra into which an operator system can be embedded in \cite{hamana}. This minimal $C^*$-algebra is called the $C^*$-envelope of the operator system. The difference between an operator system and its $C^*$-envelope is measured by the propagation number, which was first introduced in \cite{spectralTruncations}. 

In this paper we turn our attention towards a certain class of finite-dimensional operator systems. These operator systems appear as the non-commutative graphs that correspond to classical graphs and are termed ``graph operator systems'' in the context of quantum information theory \cite[Sect. 2.5]{graphOpSys}. Equivalently, such graphs correspond to tolerance relations on finite sets and this is the point of view we take here. As in \cite{toleranceRelations} we associate concrete operator systems to such tolerance relations. The $C^*$-envelopes and propagation numbers of operator systems associated with tolerance relations on finite sets are already known \cite{toleranceRelations}. The duals of these operator systems are briefly discussed in \cite[Sect. 3.5.1]{graphOpSys}, however, to the best of our knowledge, these duals have not yet been studied in depth. 

In this paper we study the duals of operator systems associated to tolerance relations on finite sets. In case the graph associated with such an operator system is chordal, we are able to find a concrete realization of its dual. In this case we will use this concrete realization to compute the $C^*$-envelope and propagation number of this dual. It will also become clear that characterizing the duals of operator systems associated to tolerance relations on finite sets is equivalent to a problem known as the positive semi-definite completion problem for partial matrices that is discussed in \cite{psd}. This problem has been well-studied and remains unsolved in full generality. Hence we cannot expect to easily obtain a fully general characterization of the dual of an operator system associated to a tolerance relation on a finite set whose corresponding graph might not be chordal.

From the point of view of \cite{toleranceRelations}, an operator system associated to a tolerance relation on a finite set is an approximation of some metric space. Hence one might expect the pure state space of this operator system to approximate this metric space in some sense. Since the pure states of an operator system correspond to extreme rays in its dual, this is motivation for us to determine the extreme rays of the dual of an operator system associated to a tolerance relation on a finite set. We determine these extreme rays in the case where the graph associated to this operator system is chordal, and find concordance with the general results in \cite[Proposition 3.11]{toleranceRelations} and \cite[Proposition 21]{ALL21}.  

Finally, we will illustrate our results by applying them to operator systems of band matrices. Band matrices are important because they frequently arise in applied linear algebra \cite{linAlg1, linAlg2}. Operator systems of band matrices are also interesting because they are exactly the operator systems that arise in Example 4.9 in \cite{toleranceRelations} when studying the interval at a finite resolution.

\section{Preliminaries}
\subsection{Preliminaries on operator systems}
In this subsection we briefly recall the theory of operator systems. 
More specifically, we review those objects related to an operator system that we are interested in computing, namely the dual of an operator system, the $C^*$-envelope of an operator system and the propagation number of an operator system. For more details we refer to \cite{injectivity, spectralTruncations, toleranceRelations, operatorSpaces, paulsen}.

\subsubsection{Abstract and concrete operator systems}
Let us first recap the definition of operator systems. We will note in particular the distinction between abstract and concrete operator systems, even though the two notions are equivalent.

Let $E$ be a \emph{$*$-vector space}, which means that it is a complex vector space equipped with a conjugate-linear involution $*:E\to E:v\mapsto v^*$. We write $E_h=\{x\in E\mid x^*=x\}$ for the real subspace of Hermitian elements. Let $E_+\subseteq E_h$ be a convex cone, which is proper in the sense that $E_+\cap -E_+=\{0\}$. We call the elements of $E_+$ \emph{positive} and we call a $*$-vector space equipped with such a cone an \emph{ordered} $*$-vector space. If $\varphi:E\to F$ is a linear map between ordered $*$-vector spaces then it is called \emph{positive} if $\varphi(E_+)\subseteq F_+$. An invertible positive map is called an \emph{order isomorphism} if its inverse is also positive. A positive map which is an order isomorphism onto its image is called an \emph{order embedding}.

If $E$ is a $*$-vector space, then the vector space $M_n(E)$ consisting of matrices with entries in $E$ is canonically a $*$-vector space by defining $(x_{ij})^*=(x^*_{ji})$, where $(x_{ij})\in M_n(E)$ denotes the matrix with entries $x_{ij}\in E$.
\begin{definition}
Let $E$ be a $*$-vector space where for each $n$ the space $M_n(E)_+$ is given an order such that the orders satisfy the following compatibility condition
\begin{equation}
    NM_m(E)_+N^* \subseteq M_n(E)_+,
\end{equation}
for every $N\in M_{nm}(\C)$. In this case $E$ is called \emph{matrix ordered}.
\end{definition}
A map $\varphi:E\to F$ between $*$-vector spaces induces maps $\varphi_n:M_n(E)\to M_n(F)$ by applying $\varphi$ entrywise. 
\begin{definition}
A linear map $\varphi:E\to F$ between matrix ordered $*$-vector spaces is called \emph{completely positive}, if $\varphi_n$ is positive for all $n$. If moreover $\varphi$ is invertible and $\varphi^{-1}$ is completely positive, then $\varphi$ is called a \emph{complete order isomorphism}.
\end{definition}
A map $\varphi$ between matrix ordered $*$-vector spaces is called a \emph{complete order embedding} if it is a complete order isomorphism onto its image.

If $e\in E_+$ is an element in an ordered $*$-vector space, then it is called an \emph{order unit} if for all $x\in E_h$ there exists $\lambda\geq 0$ such that $\lambda e+x\in E_+$. An order unit $e\in E$ is called \emph{Archimedean} if $
\lambda e+x\in E_+$ for all $\lambda>0$ implies $x\in E_+$.
\begin{definition}
An \emph{(abstract) operator system} is a matrix-ordered $*$-vector space for which there exists an \emph{Archimedean matrix order unit} $e\in E$, which means that
\[
I_n = \begin{pmatrix}e & 0 & \dots & 0\\ 0& \ddots & \ddots & \vdots \\ \vdots & \ddots & \ddots & 0 \\ 0 & \dots & 0 & e\end{pmatrix}
\]
is an Archimedean order unit for $M_n(E)$ for all $n$.
\end{definition}
Let $A$ be a $C^*$-algebra, then equipping $A$ with the cone $A_+$ of positive elements in $A$ turns $A$ into an ordered $*$-vector space. Using that $M_n(A)$ is a $C^*$-algebra for all $n$, we see that $A$ has a canonical matrix order. If $A$ has a unit, then $A$ is canonically an operator system.

We now turn our attention towards concrete operator systems and their relation with abstract operator systems.
\begin{definition}
Let $H$ be a Hilbert space and let $E\subseteq B(H)$ be a subspace. If $\text{id}_H\in E$ and $E=E^*$, then we call $E$ a \emph{(concrete) operator system}.
\end{definition}
If $E\subseteq B(H)$ is a concrete operator system, then by using that $M_n(B(H))\cong B(H^n)$ canonically we can give $E$ a matrix order by setting $M_n(E)_+=M_n(E)\cap B(H^n)_+$. Now $\text{id}_H\in E$ is an Archimedean matrix order unit and so $E$ is also an abstract operator system. The fact that every abstract operator system can be realized concretely in this way is a celebrated result by Choi and Effros.
\begin{theorem}[Choi-Effros]
Let $E$ be an abstract operator system with Archimedean matrix order unit $e$, then there exists a Hilbert space $H$ and a concrete operator system $E^\prime\subseteq B(H)$ together with a complete order isomorphism $\varphi:E\to E^\prime$ such that $\varphi(e)=\text{id}_H$.
\end{theorem}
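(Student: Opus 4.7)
The plan is to prove Choi--Effros by showing that an abstract operator system has enough concretely-realizable completely positive maps into matrix algebras to separate the matrix cones, and then to assemble them into a single complete order embedding via direct sum. The whole argument has three parts: (i) a ``functionals vs.\ matrix maps'' dictionary, (ii) a Hahn--Banach separation producing enough states, (iii) an explicit construction of $H$ and $\varphi$.

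First I would set up the dictionary between linear functionals and matrix-valued maps. For any $*$-vector space $E$ there is a natural linear bijection between linear maps $\psi: M_n(E) \to \C$ and linear maps $\varphi: E \to M_n(\C)$, given by $\varphi(x)_{ij} = \psi(x\otimes E_{ij})$, where $E_{ij}$ is the standard matrix unit. The point I would verify carefully is that under this identification, $\psi$ being a state on the operator system $M_n(E)$ (positive with $\psi(I_n)=1$ in the appropriate normalization) corresponds exactly to $\varphi$ being unital completely positive, and more generally positivity of $\psi_k$ on $M_k(M_n(E)) \cong M_{kn}(E)$ matches complete positivity of $\varphi$. This is a standard but essential bookkeeping step using the matrix-order compatibility condition $NM_m(E)_+N^* \subseteq M_n(E)_+$.

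Next, I would prove the separation statement: for every $n$ and every Hermitian $x \in M_n(E)_h \setminus M_n(E)_+$ there is a UCP map $\varphi_x: E \to M_n(\C)$ with $(\varphi_x)_n(x)$ not positive semidefinite. The Archimedean matrix order unit $I_n$ endows $M_n(E)_h$ with an order seminorm $\|y\| = \inf\{\lambda > 0 : \lambda I_n \pm y \in M_n(E)_+\}$, with respect to which the Archimedean property makes $M_n(E)_+$ closed. Since $x \notin M_n(E)_+$, Hahn--Banach separates $x$ from $M_n(E)_+$ by a real-linear functional $\ell$, and one argues that $\ell$ can be taken positive on $M_n(E)_+$ (after normalization so that $\ell(I_n)=1$) with $\ell(x) < 0$. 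Complexifying $\ell$ yields a state $\psi$ on $M_n(E)$ with $\psi(x) < 0$, so there is a unit vector $\xi \in \C^n$ witnessing non-positivity of the associated UCP map $\varphi_x$ at $x$. The technical nuisance here, and what I expect to be the main obstacle, is extending the positive linear functional from a hyperplane to the whole space while keeping both positivity and the unital normalization; this needs either a Krein-type order-extension theorem or a careful truncation argument using the Archimedean property of $I_n$, and has to be done uniformly enough that the resulting UCP maps really separate the matrix cones.

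Finally, with separation in hand, I would form $H = \bigoplus_{\varphi} H_\varphi$, where the sum runs over all UCP maps $\varphi: E \to M_{n(\varphi)}(\C) = B(H_\varphi)$ for all $n$, and define $\Phi: E \to B(H)$ by $\Phi(a) = \bigoplus_\varphi \varphi(a)$. Each $\varphi$ is completely positive, so $\Phi$ is completely positive and unital (hence sends $e$ to $\id_H$), and $\Phi(E)$ is a self-adjoint unital subspace of $B(H)$, i.e.\ a concrete operator system. Injectivity and the complete order embedding property follow from the separation step: if $y \in M_n(E)$ satisfies $\Phi_n(y) \in M_n(B(H))_+$, then $\varphi_n(y) \geq 0$ for every UCP $\varphi: E \to M_k(\C)$ and all $k$, which by the separation statement forces $y \in M_n(E)_+$. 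Taking $\varphi = \Phi$ restricted to the image, one obtains the required concrete realization $E' = \Phi(E) \subseteq B(H)$ with $\Phi: E \to E'$ a complete order isomorphism sending $e$ to $\id_H$.
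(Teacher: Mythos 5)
Your sketch is exactly the standard Choi--Effros argument that the paper itself defers to (it cites \cite{injectivity} and Chapter 13 of \cite{paulsen} rather than giving a proof): the dictionary between positive functionals on $M_n(E)$ and completely positive maps $E\to M_n(\C)$, a Hahn--Banach/Krein separation of $M_n(E)_+$ using the order seminorm coming from the Archimedean matrix order unit, and a direct sum over all such maps; the outline is correct, including your honest flagging of the positive-extension step as the technical crux. The one additional detail worth noting is that the completely positive map $\varphi$ associated to a state on $M_n(E)$ need not be unital ($\varphi(e)$ is merely a positive matrix of normalized trace), so to get $\Phi(e)=\id_H$ one should first compress each $\varphi$ by the support projection of $\varphi(e)$ and conjugate by $\varphi(e)^{-1/2}$ on its range, which preserves the witnessed non-positivity.
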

\begin{proof}
The original proof can be found in \cite{injectivity}. See also Chapter 13 in \cite{paulsen}.
\end{proof}
\begin{remark}
Because every $C^*$-algebra $A$ can be embedded in the bounded operators on some Hilbert space $H$, every $*$-closed unital subspace $E\subseteq A$ of a unital $C^*$-algebra $A$ can be regarded as an operator system.
\end{remark}
While abstract operator systems are very useful for theoretical purposes, in practice it is often useful to have a concrete realization of the operator system one is working with.

In analogy with the $C^*$-algebra case we can define states on any operator system $E$. We denote by $E^d$ the dual vector space of $E$ consisting of linear functionals $\varphi:E\to\C$. We can give $E^d$ a canonical $*$-vector space structure by defining $\varphi^*(x)=\varphi(x^*)^*$ for all $x\in E$.
\begin{definition}
The \emph{state space} $\mathcal{S}(E)$ of an operator system $E$ consists of all unital positive linear functionals:
\begin{equation}
    \mathcal{S}(E) = \{\varphi\in E^d \mid \varphi(e)=1\text{ and }\varphi\text{ is positive}\}.
\end{equation}
It is clear that $\mathcal{S}(E)$ is a convex subset of $E^d$. Its extreme points are called \emph{pure states}.
\end{definition}
The importance of considering the pure states of an operator system is exemplified by the $C^*$-algebra case. If we consider a unital commutative $C^*$-algebra $C(X)$, which is the algebra of continuous functions on some compact Hausdorff space $X$, the pure states of $C(X)$ correspond to points of $X$.

\subsubsection{Dual operator systems}
For finite-dimensional operator systems there exists a satisfactory theory of dual operator systems, which was already developed in \cite{injectivity}. More recently the theory of duals for infinite-dimensional operator systems was also developed in \cite{infiniteDual}, however we will not make use of this infinite-dimensional theory. One of the uses of dual operator systems is to aid in determining the (pure) states of an operator system.

\begin{definition}
If $E$ is a matrix ordered $*$-vector space then we can define a matrix order on $E^d$ by exploiting the identification $M_n(E^d)\cong M_n(E)^d$:
\begin{equation}
    M_n(E^d)_+ = \{\varphi\in M_n(E)^d_h \mid \varphi(M_n(E)_+)\geq 0\}.
\end{equation}
With this matrix order $E^d$ is called the \emph{dual matrix ordered $*$-vector space} of $E$.
\end{definition}
In general $E^d$ does not contain an Archimedean matrix order unit making it into an operator system, even if $E$ is an operator system. However if $E$ is a finite-dimensional operator system then this problem does not arise.
\begin{proposition}[Choi-Effros]
If $E$ is a finite-dimensional operator system then the dual $E^d$ is an operator system.
\end{proposition}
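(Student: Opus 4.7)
The plan is to construct an Archimedean matrix order unit for $E^d$ out of a single \emph{faithful} positive linear functional on $E$, meaning a $\omega \in E^d$ that is strictly positive on $E_+\setminus\{0\}$. The candidate matrix order units will be the diagonal matrices $\Omega_n \in M_n(E^d)$ with $\omega$ repeated $n$ times on the diagonal; under the canonical identification $M_n(E^d) \cong M_n(E)^d$ these correspond to the functionals $(x_{ij}) \mapsto \sum_i \omega(x_{ii})$.

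First I would produce $\omega$. Because $E$ is finite-dimensional and $e$ is an order unit, the cone $E_+$ is a closed, proper, solid cone in the real vector space $E_h$; a standard finite-dimensional convex-duality argument (or, concretely, summing several vector states coming from a Choi-Effros realization $E \subseteq B(H)$) produces a positive $\omega$ with $\omega(x)>0$ for every nonzero $x \in E_+$. Verifying that this $\omega$ is an Archimedean order unit at level $n=1$ is then routine: for any Hermitian $\varphi \in (E^d)_h$ the function $\varphi$ is bounded on the compact unit sphere $S = \{x \in E_+ \mid \|x\|=1\}$, while $\omega$ attains a strictly positive minimum $\delta>0$ on $S$, so any $\lambda \geq \sup_S|\varphi|/\delta$ makes $\lambda\omega+\varphi \geq 0$, and the Archimedean property follows by evaluating at $x \in E_+$ and letting $\lambda \to 0^+$.

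Second, I would upgrade to arbitrary $n$. Positivity of $\Omega_n$ is clear because the diagonal entries of a matrix in $M_n(E)_+ \subseteq B(H^n)_+$ lie in $E_+$, where $\omega$ is nonnegative. The order-unit and Archimedean verifications at level $n$ then proceed exactly as at level one, provided one knows that $\Omega_n$ is itself \emph{faithful} on $M_n(E)_+$. This faithfulness comes from two ingredients: $\omega$ is faithful on $E_+$, so $\sum_i \omega(x_{ii})=0$ with $(x_{ij})\geq 0$ forces each $x_{ii}=0$; and the classical diagonal-zero principle for positive operators on $H^n$ then forces the entire matrix to vanish.

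The main obstacle I anticipate is precisely this: arranging faithfulness of $\Omega_n$ at every matrix level $n$ simultaneously from one scalar functional $\omega$. This is the step that forces the passage through a concrete realization of $E$, since the abstract matrix-ordered structure carries no a priori diagonal-zero principle; the Choi-Effros embedding is what supplies this principle and ties the whole construction together. Finite-dimensionality is used essentially twice more, for closedness of $E_+$ and for compactness of the unit sphere in $M_n(E)_+$, without which the order-unit bound cannot be extracted.
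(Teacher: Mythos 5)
Your argument is correct and is essentially the classical Choi--Effros proof that the paper simply cites (Corollary 4.5 of \cite{injectivity}) rather than reproving: one exhibits a faithful positive functional $\omega$ on the finite-dimensional system and shows it is an Archimedean matrix order unit for $E^d$, with the diagonal-zero principle supplied by a concrete realization handling faithfulness at all matrix levels. Since the paper offers no independent argument, there is nothing further to compare; your sketch fills in the cited proof along its standard lines.
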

\begin{proof}
This is Corollary 4.5 in \cite{injectivity}.
\end{proof}

It turns out that the operator system $M_n(\C)$ is actually self-dual and this fact will prove useful to us later.
\begin{lemma}\label{lem:identification}
The canonical vector space isomorphism $M_n(\C)^d\cong M_n(\C)$ is also a complete order isomorphism.
\end{lemma}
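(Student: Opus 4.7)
The plan is to fix the canonical pairing explicitly and then reduce positivity on matrix levels to a well-known fact about PSD matrices. Define $\psi : M_n(\C) \to M_n(\C)^d$ by $\psi(A)(B) = \operatorname{tr}(AB)$ (or $\operatorname{tr}(A^T B)$, depending on the convention fixing the identification $M_k(E^d) \cong M_k(E)^d$); this is a linear isomorphism by nondegeneracy of the trace form, and it is $*$-preserving since $\operatorname{tr}(A^* B^*) = \overline{\operatorname{tr}(BA)}$. So the task reduces entirely to showing that $\psi$ is a complete order isomorphism.

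Next, I would examine the induced map $\psi_k : M_k(M_n(\C)) \to M_k(M_n(\C)^d)$ at level $k$. Using the canonical identifications $M_k(M_n(\C)) \cong M_{kn}(\C)$ and $M_k(M_n(\C)^d) \cong M_k(M_n(\C))^d \cong M_{kn}(\C)^d$, one checks by a direct computation on matrix units that $\psi_k$ is, up to the same trace convention, simply $\psi$ for the size $kn$. Concretely, for $A = (A_{ij}) \in M_k(M_n(\C))$ thought of as an element of $M_{kn}(\C)$, the functional $\psi_k(A)$ acts on $X = (X_{ij}) \in M_k(M_n(\C))$ by $\sum_{ij}\operatorname{tr}(A_{ij} X_{ij})$, which is (up to transposing blocks) exactly $\operatorname{tr}(AX)$ in $M_{kn}(\C)$. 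Thus verifying that $\psi$ is a complete order isomorphism reduces to verifying that $\psi$ (at every size) is an order isomorphism.

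It remains to show: a self-adjoint $A \in M_m(\C)$ is positive semi-definite if and only if $\operatorname{tr}(AB) \geq 0$ for every $B \in M_m(\C)_+$. The forward direction is immediate from $\operatorname{tr}(AB) = \operatorname{tr}(A^{1/2} B A^{1/2}) \geq 0$. For the converse, specializing to rank-one $B = vv^*$ gives $\langle Av, v\rangle = \operatorname{tr}(Avv^*) \geq 0$ for every $v$, so $A \geq 0$. Note also that the Hermiticity requirement built into the definition of $M_k(E^d)_+$ (elements must lie in $M_k(E)^d_h$) forces the corresponding $A$ to be self-adjoint automatically, so there is nothing further to check.

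The only place that requires genuine care, rather than routine verification, is the bookkeeping in the second paragraph: one has to pin down the isomorphism $M_k(M_n(\C)^d) \cong M_k(M_n(\C))^d$ and confirm that under the block–entry vs. full matrix identification $M_k(M_n(\C)) \cong M_{kn}(\C)$ the trace pairing really does intertwine with the entrywise pairing of functionals (possibly absorbing a transpose into the definition of $\psi$). Once this is done cleanly the result follows at every level $k$ from the single scalar fact recalled above.
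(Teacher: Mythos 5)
Your overall strategy is the standard one (the paper itself only cites Choi--Effros for this lemma), and the scalar fact you reduce to is correct. However, the step you yourself single out as ``the only place that requires genuine care'' is exactly where the proof lives, and the hedge ``up to transposing blocks'' is not harmless bookkeeping: the two conventions you leave open are \emph{not} interchangeable, and for one of them the statement fails at matrix levels $k\geq 2$. Concretely, if you take $\psi(A)(B)=\operatorname{tr}(AB)$, then under the identifications $M_k(M_n(\C)^d)\cong M_k(M_n(\C))^d$ and $M_k(M_n(\C))\cong M_{kn}(\C)$ the functional $\psi_k(A)$ acts as $B\mapsto\sum_{p,q}\operatorname{tr}(A_{pq}B_{pq})=\operatorname{tr}(A^{\Gamma}B)$, where $A^{\Gamma}$ is the \emph{partial} (blockwise) transpose --- not $\operatorname{tr}(AB)$ at size $kn$. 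The partial transpose does not preserve positivity: for $k=n=2$ the positive rank-one matrix $A=\sum_{i,j}e_{ij}\otimes e_{ij}$ has partial transpose equal to the swap operator, which has eigenvalue $-1$, so $\psi_2(A)$ fails to be a positive functional even though $A\geq 0$. Thus $A\mapsto\operatorname{tr}(A\,\cdot\,)$ is an order isomorphism at level one but \emph{not} a complete order isomorphism; this is just the familiar fact that the transpose map is positive but not completely positive, and it is the entire content of the word ``complete'' in the lemma.

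The argument does go through once you commit to the entrywise pairing $\psi(A)(B)=\sum_{i,j}a_{ij}b_{ij}=\operatorname{tr}(A^{T}B)$, i.e.\ the identification $f\mapsto(f(e_{ij}))_{ij}$. With that choice the level-$k$ pairing is again the entrywise pairing on $M_{kn}(\C)$, so the only transpose that ever appears is the \emph{full} transpose at size $kn$, and $A\geq 0\iff A^{T}\geq 0$ does hold there. Your reduction to the scalar statement ``a self-adjoint $A$ is positive semi-definite iff $\operatorname{tr}(AB)\geq 0$ for all positive $B$'' is then valid at every level and finishes the proof. So: right skeleton, but the convention must be pinned down explicitly, since with the wrong one the ``routine verification'' in your second paragraph is false rather than merely tedious.
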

\begin{proof}
This is proved in \cite{injectivity} after Lemma 4.3.
\end{proof}

Let $E\subseteq A$ be an operator system sitting unitally inside some finite-dimensional $C^*$-algebra $A$, then its dual can be described as some kind of quotient. We write $E^\perp$ for the annihilator of $E$ inside $A^d$:
\begin{equation}
    E^\perp = \{\varphi\in A^d\mid \varphi(x)=0\text{ }\forall x\in E\}.
\end{equation}
We can equip the $*$-vector space $A^d/E^\perp$ with an order by
\[
(A^d/E^\perp)_+ = \{\varphi + E^\perp\in A^d/E^\perp\mid \varphi+\eta\in A^d_+\text{ for some }\eta\in E^\perp\}.
\]
The fact that this cone is indeed proper follows from the observation that $E^\perp$ contains no nonzero positive elements. Exploiting the canonical identification $M_n(A^d/E^\perp)\cong M_n(A^d)/M_n(E^\perp)$ we can extend this order to a matrix order. As the following theorem shows, this matrix ordered $*$-vector space is in fact an operator system.
\begin{remark}
The general theory of quotients of operator systems is quite subtle, see \cite{quotient}.
\end{remark}
\begin{theorem}\label{thm:quotient}
If $E\subseteq A$ is an operator system inside of a finite-dimensional $C^*$-algebra $A$, then we have a complete order isomorphism $A^d/E^\perp \cong E^d$, induced by restricting elements of $A^d$ to $E$.
\end{theorem}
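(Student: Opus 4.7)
The plan is to exhibit the candidate map explicitly and then verify the three requirements (linear bijection, completely positive, inverse completely positive) in turn.

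First I define the restriction map $r : A^d \to E^d$ by $r(\varphi) = \varphi|_E$. Its kernel is $E^\perp$ by definition of the annihilator, so $r$ descends to an injective linear map $\bar r : A^d/E^\perp \to E^d$. Since $E$ is a linear subspace of the finite-dimensional vector space $A$, every linear functional on $E$ extends to a linear functional on $A$ (pick a vector-space complement of $E$ inside $A$ and extend by zero), so $r$, and hence $\bar r$, is surjective. Thus $\bar r$ is a linear bijection, and it suffices to check that both $\bar r$ and $\bar r^{-1}$ are completely positive.

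Next I verify that $\bar r$ is completely positive. Under the canonical identification $M_n(A^d/E^\perp) \cong M_n(A^d)/M_n(E^\perp)$, an element of $M_n(A^d/E^\perp)_+$ is represented by some $\varphi \in M_n(A^d)$ for which there exists $\eta \in M_n(E^\perp)$ with $\varphi + \eta \in M_n(A^d)_+$. Because $\eta$ annihilates $M_n(E)$, the image $\bar r_n(\varphi + M_n(E^\perp))$ coincides with $(\varphi + \eta)|_{M_n(E)}$, and this restriction is nonnegative on $M_n(E)_+$ thanks to the inclusion $M_n(E)_+ \subseteq M_n(A)_+$ together with positivity of $\varphi + \eta$ on $M_n(A)_+$. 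Hence $\bar r_n$ maps $M_n(A^d/E^\perp)_+$ into $M_n(E^d)_+$.

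The main obstacle is showing that $\bar r^{-1}$ is completely positive, that is, every $\psi \in M_n(E^d)_+$ admits a lift $\tilde\psi \in M_n(A^d)_+$ restricting to $\psi$ on $M_n(E)$. Under the identification $M_n(E^d) \cong M_n(E)^d$, such a $\psi$ is exactly a positive linear functional on the operator subsystem $M_n(E) \subseteq M_n(A)$ of the finite-dimensional $C^*$-algebra $M_n(A)$. Equivalently, via the self-duality of matrix algebras established in Lemma \ref{lem:identification}, $\psi$ corresponds to a completely positive map $\phi_\psi : E \to M_n(\C)$. Applying Arveson's extension theorem yields a completely positive extension $\tilde\phi_\psi : A \to M_n(\C)$, and converting back through the same identification produces the desired $\tilde\psi \in M_n(A^d)_+$ with $\tilde\psi|_{M_n(E)} = \psi$. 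This confirms $\bar r^{-1}(M_n(E^d)_+) \subseteq M_n(A^d/E^\perp)_+$ and completes the argument; the crucial input is the existence of positive (equivalently, completely positive matrix-valued) extensions, which is where Arveson is essential.
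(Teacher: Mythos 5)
Your argument is correct, and it is genuinely more self-contained than the paper's, which simply defers to Theorem 2.18 of the tolerance-relations reference. You correctly isolate the two nontrivial points: that $\bar r_n$ is positive because a representative $\varphi+\eta\in M_n(A^d)_+$ of a quotient-positive class restricts on $M_n(E)$ to the same functional as $\varphi$ and is nonnegative on $M_n(E)_+\subseteq M_n(A)_+$; and that the inverse is completely positive because every positive functional on $M_n(E)$ extends to a positive functional on $M_n(A)$. Two small remarks on the second point. First, the detour through Arveson is heavier than necessary: since $E$ sits unitally in $A$, the subspace $M_n(E)\subseteq M_n(A)$ is itself a unital operator system, and Krein's extension theorem for positive functionals on unital operator systems (e.g.\ \cite[Ch.~2]{paulsen}) already produces the required lift $\tilde\psi\in M_n(A)^d_+$ directly, with no need to pass to completely positive maps and back. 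Your route is valid (and Arveson's theorem is itself proved by exactly this reduction), but the functional-level statement is all that is used. Second, the correspondence you invoke between positive functionals on $M_n(E)$ and completely positive maps $E\to M_n(\C)$ is not quite Lemma \ref{lem:identification} (which is the self-duality $M_n(\C)^d\cong M_n(\C)$); it is the standard duality between $M_n(S)^d_+$ and $CP(S,M_n(\C))$ (Paulsen, Theorem 6.1), so the citation should be adjusted even though the fact is true. With either repair the proof stands.
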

\begin{proof}
This follows easily from Theorem 2.18 in \cite{toleranceRelations}.
\end{proof}
Combining Lemma \ref{lem:identification} and Theorem \ref{thm:quotient} we can give a description of the dual of an operator system $E\subseteq M_n(\C)$ sitting inside of the $C^*$-algebra $M_n(\C)$ of complex matrices.
\begin{corollary}\label{cor:dual}
Let $E\subseteq M_n(\C)$ be an operator system, then its dual $E^d$ can be identified with the $*$-vector space $E$ provided with the matrix order
\begin{equation}
    M_m(E^d)_+ = \{M\in M_m(E)\mid \exists N\in M_m(E^\perp), M+N\in M_{mn}(\C)_+\},
\end{equation}
where $E^\perp$ now denotes the orthogonal complement of $E$ in $M_n(\C)$ with respect to the Hilbert-Schmidt inner product.
\end{corollary}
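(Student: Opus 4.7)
The plan is to combine the two tools stated just above the corollary, namely Lemma \ref{lem:identification} and Theorem \ref{thm:quotient}, and then rewrite the resulting quotient concretely using the Hilbert--Schmidt decomposition of $M_n(\C)$. First I would apply Theorem \ref{thm:quotient} with $A = M_n(\C)$ to obtain a complete order isomorphism $E^d \cong M_n(\C)^d/E^{\perp}$, where for the moment $E^{\perp}$ denotes the annihilator of $E$ inside $M_n(\C)^d$. Then, using Lemma \ref{lem:identification}, I would identify $M_n(\C)^d$ with $M_n(\C)$ as operator systems; this combines into a complete order isomorphism $E^d \cong M_n(\C)/E^{\perp}$, provided I can check that the annihilator corresponds, under this identification, precisely to the Hilbert--Schmidt orthogonal complement.

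To do this translation, I would unwind the canonical isomorphism from Lemma \ref{lem:identification}: it sends $B \in M_n(\C)$ to the linear functional $\mathrm{tr}(B \,\cdot\,)$ (or $\mathrm{tr}(B^* \,\cdot\,)$ depending on convention). Since $E$ is self-adjoint, the condition that $\mathrm{tr}(B \,X) = 0$ for all $X \in E$ is the same as saying that $B$ is orthogonal to $E$ in the Hilbert--Schmidt inner product, so the annihilator of $E$ in $M_n(\C)^d$ is sent to the Hilbert--Schmidt orthogonal complement of $E$, which is the $E^{\perp}$ in the statement of the corollary. Consequently the orthogonal direct sum decomposition $M_n(\C) = E \oplus E^{\perp}$ gives a canonical vector space identification of the quotient $M_n(\C)/E^{\perp}$ with $E$ itself, with inverse sending $M \in E$ to the coset $M + E^{\perp}$.

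Finally, I would transfer the quotient matrix order to this description. By definition of the quotient matrix order (and using the identification $M_m(M_n(\C)/E^{\perp}) \cong M_m(M_n(\C))/M_m(E^{\perp}) \cong M_{mn}(\C)/M_m(E^{\perp})$ together with $M_m(M_n(\C))_+ = M_{mn}(\C)_+$), a coset is positive exactly when it has some representative that is positive as an element of $M_{mn}(\C)$. Writing the unique representative in $M_m(E)$ as $M$ and an arbitrary element of $M_m(E^{\perp})$ as $-N$, this becomes exactly the condition that there exists $N \in M_m(E^{\perp})$ with $M+N \in M_{mn}(\C)_+$, which is the formula in the corollary.

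I do not anticipate a serious obstacle: the proof is essentially bookkeeping that combines the cited results. The only step that needs a little care is confirming that the identification $M_n(\C)^d \cong M_n(\C)$ from Lemma \ref{lem:identification} is indeed implemented by the Hilbert--Schmidt pairing, so that ``annihilator'' and ``orthogonal complement'' really coincide; once that is verified, the matrix order formula follows immediately by transporting the quotient cone description from Theorem \ref{thm:quotient}.
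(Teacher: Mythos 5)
Your proposal is correct and follows exactly the route the paper intends: its proof is simply ``Clear from the above,'' meaning the combination of Theorem \ref{thm:quotient} (with $A = M_n(\C)$) and Lemma \ref{lem:identification}, together with the observation that under the trace pairing the annihilator of the self-adjoint subspace $E$ is its Hilbert--Schmidt orthogonal complement. Your write-up supplies precisely these bookkeeping details, including the correct transfer of the quotient matrix order.
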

\begin{proof}
Clear from the above.
\end{proof}

As said, one of the reasons for considering the dual of an operator system $E$, is to study the (pure) states of $E$. Recall that a state of $E$ is an element $\varphi\in E^d_+$ such that $\varphi(e)=1$. If $\varphi\in E^d_+$ is such that $\varphi(e)=0$, then because $e$ is an order unit it follows that $\varphi=0$ and hence every nonzero element of $E^d_+$ can be rescaled to be a state. This means that there is a correspondence between states of $E$ and \emph{rays} in $E^d_+$, which is to say half lines $R=\{\lambda \varphi \mid \lambda\geq 0\}$ with $0\neq \varphi\in E^d_+$. A ray $R$ in $E^d_+$ is called \emph{extremal} if $\varphi_1,\varphi_2\in E^d_+$ and $\varphi_1+\varphi_2\in R$ imply that $\varphi_1,\varphi_2\in R$. Pure states of $E$ correspond to extremal rays in $E^d_+$. 

\subsubsection{\texorpdfstring{$C^*$}{C*}-envelopes and propagation numbers}
Any (abstract) operator system $E$ can be embedded in a $C^*$-algebra and one can consider the $C^*$-algebra generated by $E$ under this embedding. However there are multiple ways to embed $E$ into a $C^*$-algebra, which lead to different $C^*$-algebras generated by $E$. The $C^*$-envelope of $E$ is the unique minimal $C^*$-algebra generated by $E$. We can use the $C^*$-envelope of an operator system to associate an invariant to this operator system which is called the propagation number.

If $E$ is an operator system and $\iota: E\to A$ is a complete order embedding into a $C^*$-algebra $A$, we write $C^*(\iota(E))$ for the $C^*$-algebra generated by the image $\iota(E)$. A \emph{$C^*$-extension} of an operator system is a complete order embedding $\iota: E\to A$ into a $C^*$-algebra $A$ such that $C^*(\iota(E))=A$.
\begin{definition}
Let $E$ be an operator system and $\iota:E\to A$ a $C^*$-extension such that for any other $C^*$-extension $\kappa:E\to B$ there exists a unique $*$-homomorphism $\psi:B\to A$ satisfying $\psi\circ \kappa=\iota$. In this case $A$ is called the \emph{$C^*$-envelope} of $E$ and we write $A=\Cenv(E)$.
\end{definition}
\begin{remark}
Note that the map $\psi$ above has to be surjective.
\end{remark}
By this universal property $C^*$-envelopes are clearly unique. Their existence was first established by Hamana.
\begin{theorem}[Hamana]
If $E$ is an operator system, then it has a $C^*$-envelope: $\iota: E\to \Cenv(E)$.
\end{theorem}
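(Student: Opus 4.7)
The plan is to follow Hamana's original route via the injective envelope. First, using the Choi-Effros theorem already stated, embed $E$ as a concrete operator system inside $B(H)$ for some Hilbert space $H$. Since $B(H)$ is injective (by Arveson's extension theorem), the set $\mathcal{F}$ of unital completely positive maps $\varphi:B(H)\to B(H)$ such that $\varphi|_E=\id_E$ is nonempty, convex, and closed under composition. Hamana's key observation is to look at $E$-seminorms, i.e.\ seminorms $p$ on $B(H)$ dominated by the operator norm with $p(x)=\|x\|$ for $x\in E$, and to show that a minimal such seminorm exists by a Zorn's lemma argument. This minimal seminorm is realised by some $\varphi_0\in\mathcal{F}$ which is necessarily idempotent, i.e.\ $\varphi_0\circ\varphi_0=\varphi_0$. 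The image $I(E):=\varphi_0(B(H))$ is the candidate injective envelope of $E$.

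The next step is to equip $I(E)$ with a $C^*$-algebra structure. The Choi-Effros trick defines a product on $I(E)$ by $a\cdot b:=\varphi_0(ab)$, where on the right-hand side the product is taken in $B(H)$. One then verifies that this product is associative (using the multiplicative domain property of $\varphi_0$ together with its idempotency and rigidity: any unital completely positive map $I(E)\to I(E)$ fixing $E$ must be the identity), and that together with the inherited involution and order-unit norm it makes $I(E)$ into a unital $C^*$-algebra in which $E$ sits as an operator subsystem. I would then define
\[
\Cenv(E):=C^*(E)\subseteq I(E),
\]
the $C^*$-subalgebra of $I(E)$ generated by $E$, with $\iota: E\hookrightarrow \Cenv(E)$ the inclusion.

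To verify the universal property, let $\kappa:E\to B$ be any other $C^*$-extension. By injectivity of $I(E)$ (inherited from $B(H)$ via the idempotent $\varphi_0$), the map $\iota:E\to I(E)$ extends to a unital completely positive map $\Phi:B\to I(E)$ with $\Phi\circ\kappa=\iota$. The crucial claim is that $\Phi$ is in fact a $*$-homomorphism. This is the heart of the proof: one uses the rigidity of $I(E)$ together with the multiplicative domain of a unital completely positive map (elements $b$ with $\Phi(b^*b)=\Phi(b)^*\Phi(b)$ and $\Phi(bb^*)=\Phi(b)\Phi(b)^*$ form a $C^*$-subalgebra on which $\Phi$ is multiplicative) to show that every element of $\kappa(E)$ lies in the multiplicative domain, and hence so does all of $B=C^*(\kappa(E))$. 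Then $\psi:=\Phi$ is a $*$-homomorphism $B\to I(E)$ whose image is generated by $\iota(E)$, so it corestricts to $\Cenv(E)$. Uniqueness follows because any two such $\psi$ agree on the generating set $\kappa(E)$.

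The principal obstacle is the construction of the injective envelope together with the two properties that make the whole argument work: existence of a minimal idempotent $\varphi_0$ in $\mathcal{F}$ (a non-constructive Zorn argument via minimal $E$-seminorms), and its rigidity, which is what ultimately forces the extension $\Phi$ to be multiplicative. Once those are in place, the Choi-Effros product and the multiplicative-domain argument deliver the universal property essentially formally. For a full treatment of these technical points I would refer to Hamana's original paper or to Chapter~15 of \cite{paulsen}.
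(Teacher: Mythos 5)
Your proposal is a correct outline of Hamana's original argument via the injective envelope (minimal idempotent unital completely positive projection obtained by a Zorn's lemma argument, the Choi--Effros product on its image, and rigidity plus the multiplicative-domain trick to verify the universal property), which is precisely the proof the paper defers to by citing \cite{hamana}; the paper gives no independent argument. The two genuinely hard steps --- existence of the minimal idempotent $\varphi_0$ and rigidity of $I(E)$ --- are correctly identified as the crux and appropriately outsourced to the references.
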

\begin{proof}
This was first proven by Hamana in \cite{hamana}.
\end{proof}
As mentioned before, using the $C^*$-envelope we can define an invariant of operator systems which is called the propagation number. The propagation number was first introduced in \cite{spectralTruncations}. It can be useful to distinguish different operator systems and it measures the difference between an operator system and its $C^*$-envelope.

If $E\subseteq A$ is an operator system sitting inside of a $C^*$-algebra $A$, then we write $E^{\circ n}$ for the closed linear span of products of elements in $E$ with at most $n$ factors.

\begin{definition}
Let $E$ be an operator system and $\iota:E\to \Cenv(E)$ its $C^*$-envelope. The \emph{propagation number} $\prop(E)$ of $E$ is defined by
\begin{equation}
    \prop(E) = \inf\{n\in\N\mid \iota(E)^{\circ n}=\Cenv(E)\}.
\end{equation}
\end{definition}
\begin{remark}
Note that the propagation number of an operator system can be infinite.
\end{remark}
The universal property of $C^*$-envelopes shows that the propagation number is well-defined (i.e. independent of the choice of $C^*$-envelope). It also follows from the universal property that the propagation number is invariant under complete order isomorphisms. In fact something even stronger is true. Proposition 2.42 in \cite{spectralTruncations} states that the propagation number is invariant under stable equivalence of operator systems. Moreover, a notion of Morita equivalence for operator systems was developed recently in \cite{morita} and it was shown there that this notion coincides with stable equivalence. Hence we may say that the propagation number is Morita invariant as well.

\subsection{Preliminaries on partial matrices and chordal graphs}
To prove our main result we require a result on the positive semi-definite completion problem for partial Hermitian matrices. Therefore in this subsection we will review the relevant definitions and state this key result on the completion problem for partial Hermitian matrices. We will also consider chordal graphs, as we need them to formulate the key result. For a more detailed discussion consult \cite{completion}.

A \emph{partial} $n\times n$ matrix $M=(m_{ij})$ is a complex $n\times n$ matrix some of whose entries $m_{ij}$ might be unspecified. We will assume that all partial matrices that we work with are partial Hermitian matrices, which means that $m_{ii}$ is defined and real for all $1\leq i\leq n$ and if $m_{ij}$ is defined, then $m_{ji}$ is defined and $m_{ji}=\overline{m_{ij}}$. If $M=(m_{ij})$ is a partial matrix, then its graph $G(M)=(X,\mathcal{E})$ has vertices $X=\{1,\dots,n\}$ and the set of edges $\mathcal{E}$ is such that there is an edge between $i$ and $j$ if and only if $m_{ij}$ is specified (since we work with partial Hermitian matrices, we need only consider undirected graphs).

When given a partial matrix, it is interesting to look for completions of said matrix that have certain properties. A \emph{completion} of a partial matrix $(m_{ij})$ is a matrix $N=(n_{ij})\in M_n(\C)$ such that $n_{ij}=m_{ij}$ whenever $m_{ij}$ is specified. This completion is called \emph{positive} if $N$ is a positive matrix (when we say ``positive matrix" we will always mean ``positive semi-definite matrix").

Because principal submatrices of positive matrices are again positive, a necessary condition for a partial matrix $(m_{ij})$ to admit a positive completion is that the principal submatrix $(m_{ij})_{i,j\in C}$ is positive for every $C\subseteq X$ such that all the $m_{ij}$ with $i,j\in C$ are specified (i.e. such that $C\subseteq X$ forms a clique in the graph $G(M)$). 
\begin{definition}
A partial matrix $M=(m_{ij})$ is called \emph{partially positive} if every fully specified principal submatrix $(m_{ij})_{i,j\in C}$ with $C\subseteq X$ is positive. Said differently, $M$ is partially positive if $(m_{ij})_{i,j\in C}$ is positive for every clique $C\subseteq X$.
\end{definition}
\begin{remark}
It suffices to verify that $(m_{ij})_{i,j\in C}$ is positive for every maximal clique $C\subseteq X$.
\end{remark}
It turns out that the question of whether being partially positive is also a sufficient condition for $M$ to admit a positive completion depends on the graph $G(M)$.
\begin{definition}
A graph $G=(X,\mathcal{E})$ is called \emph{chordal} if whenever we have a cycle in $G$ of length at least $4$, then there exists a chord in this cycle (i.e. an edge in $\mathcal{E}$ connecting two vertices which are not adjacent in the cycle).
\end{definition}
We can now state the key result that we need to prove our main result.
\begin{theorem}\label{thm:chordal}
A graph $G$ is chordal if and only if every partially positive matrix $M$ with graph $G(M)=G$ admits a positive completion.
\end{theorem}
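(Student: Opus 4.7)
My plan is to handle the two implications separately: the substantive \emph{if} direction by induction on $|X|$ via simplicial vertices together with a Schur complement extension, and the \emph{only if} direction by exhibiting an explicit counterexample supported on a chordless cycle.

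For the \emph{if} direction, assume $G$ is chordal and $M$ is partially positive with $G(M) = G$. I induct on $n := |X|$, the case $n = 1$ being trivial. For the inductive step, I invoke the classical fact that every chordal graph contains a \emph{simplicial vertex} $v$, meaning that $N(v)$ induces a clique. The graph $G \setminus v$ is chordal and the restriction of $M$ to $X \setminus \{v\}$ is partially positive, so by the induction hypothesis it admits a positive completion $\widetilde{M}'$. To extend $\widetilde{M}'$ by the row and column corresponding to $v$, observe that $C := N(v) \cup \{v\}$ is a clique, so $M_C$ is fully specified and positive. The Schur complement criterion for $M_C$ says that the specified column $c_N := (m_{vw})_{w \in N(v)}$ lies in the range of $M_{N(v)}$, with $m_{vv} \geq c_N^* M_{N(v)}^{\dagger} c_N$. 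Writing $c_N = M_{N(v)} z_N$ and extending $z_N$ by zero to $z \in \C^{X \setminus \{v\}}$, set $c := \widetilde{M}' z$; the restriction of $c$ to $N(v)$ equals $c_N$ because $\widetilde{M}'_{N(v),N(v)} = M_{N(v)}$ (those entries are specified as $N(v)$ is a clique) and $z$ vanishes outside $N(v)$. A second Schur complement argument then shows that $\begin{pmatrix} \widetilde{M}' & c \\ c^* & m_{vv} \end{pmatrix}$ is positive, since its Schur complement equals $m_{vv} - c^* \widetilde{M}'^{\dagger} c = m_{vv} - z^* \widetilde{M}' z = m_{vv} - c_N^* M_{N(v)}^{\dagger} c_N \geq 0$.

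For the \emph{only if} direction, assume $G$ contains a chordless cycle $v_1, v_2, \ldots, v_k, v_1$ of length $k \geq 4$. Define a partial matrix $M$ by $m_{ii} = 1$ for all $i$, $m_{v_i v_{i+1}} = m_{v_{i+1} v_i} = 1$ for $1 \leq i < k$, $m_{v_k v_1} = m_{v_1 v_k} = -1$, and $m_{ij} = 0$ on every remaining specified edge. Because the cycle has no chords, no clique of $G$ can contain three consecutive cycle vertices, so every fully specified principal submatrix is a direct sum of diagonal entries and $2 \times 2$ blocks of the form $\begin{pmatrix} 1 & \pm 1 \\ \pm 1 & 1 \end{pmatrix}$, each positive; hence $M$ is partially positive. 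Suppose now a positive completion $\widetilde{M} = B^* B$ existed. The rank-one cycle blocks would force $B e_{v_i} = B e_{v_{i+1}}$ for $1 \leq i < k$ and $B e_{v_k} = -B e_{v_1}$; chaining these around the cycle yields $B e_{v_1} = -B e_{v_1}$, so $B e_{v_1} = 0$, contradicting $m_{v_1 v_1} = 1$.

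The main obstacle will be the extension step of the \emph{if} direction, namely verifying that $c = \widetilde{M}' z$ both matches the specified data on $N(v)$ and produces a positive matrix. Both points hinge on $v$ being simplicial, which is exactly what makes the specified column on $N(v)$ (governed by the clique submatrix $M_{N(v)}$) realizable inside the previously constructed $\widetilde{M}'$. Chordality is used only to supply such a simplicial vertex at each inductive step; the cycle construction in the converse explains precisely why dropping this hypothesis breaks everything.
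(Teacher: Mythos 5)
Your proof is correct, and it is necessarily a different route from the paper's, because the paper does not prove this statement at all: it simply cites Theorem 7 of the Grone--Johnson--S\'a--Wolkowicz completion paper. Your forward direction argues by deleting a simplicial vertex $v$ (which exists by chordality; note this is exactly the first vertex of a perfect elimination ordering, so you could cite Proposition~\ref{prop:elimination} instead of an unreferenced ``classical fact''), completing the rest by induction, and then re-attaching the row and column of $v$ via the generalized Schur complement criterion $\bigl(\begin{smallmatrix} A & b \\ b^* & d \end{smallmatrix}\bigr) \geq 0 \iff A \geq 0,\ b \in \mathrm{range}(A),\ d \geq b^* A^{\dagger} b$; the key computation $c^* \widetilde{M}'^{\dagger} c = z^* \widetilde{M}' z = c_N^* M_{N(v)}^{\dagger} c_N$ is right, and it correctly exploits that the $N(v)\times N(v)$ block of $\widetilde{M}'$ is forced to equal $M_{N(v)}$ because $N(v)$ is a clique. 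The cited original proof instead fixes the vertex set and fills in one unspecified entry at a time along a sequence of chordal supergraphs; your vertex-deletion induction is arguably cleaner and more self-contained, at the cost of invoking the Moore--Penrose pseudoinverse. Your converse is the standard counterexample and is airtight: the only presentational nit is that the justification for the block-diagonal shape of the clique submatrices should be that a clique contains at most two cycle vertices, and then only if they are consecutive (any two non-consecutive vertices of a chordless cycle of length at least $4$ are non-adjacent), which is slightly stronger than the stated ``no three consecutive cycle vertices''; the Gram-vector chaining $Be_{v_1} = \cdots = Be_{v_k} = -Be_{v_1}$ via equality in Cauchy--Schwarz then finishes it.
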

\begin{proof}
This is Theorem 7 in \cite{completion}.
\end{proof}
There are some results that show when a partially positive matrix whose graph is not chordal admits a positive completion, however these are not applicable in full generality and a general characterization of partially positive matrices that admit a positive completion is not known (see \cite{psd} and \cite{schur}).

A characterization of chordal graphs that we shall need later to compute propagation numbers is the following.
\begin{definition}
If $G=(X,\mathcal{E})$ is a graph a \emph{perfect elimination ordering} of its vertices is an ordering $v_1<v_2<\dots<v_n$ of all the vertices $v_i\in X$ such that $C[i]=\{v_j\mid j=i\text{ or }j>i\text{ and }\{i,j\}\in \mathcal{E}\}$ is a clique for all $i$.
\end{definition}
\begin{proposition}\label{prop:elimination}
A graph $G$ is chordal if and only if there exists a perfect elimination ordering of its vertices.
\end{proposition}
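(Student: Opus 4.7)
The plan is to prove the two directions of the equivalence separately, with the ``only if'' direction requiring some genuine graph-theoretic work.

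The easy direction is ``if.'' Suppose $v_1<\dots<v_n$ is a perfect elimination ordering and, toward a contradiction, that $G$ contains a chordless cycle $C$ of length at least $4$. Let $v_i$ be the vertex in $C$ whose index $i$ is smallest among vertices of $C$. Both of its neighbors $v_j, v_k$ in the cycle satisfy $j,k>i$ and $\{v_i,v_j\},\{v_i,v_k\}\in\mathcal{E}$, so $v_j,v_k\in C[i]$. Since $C[i]$ is a clique, $\{v_j,v_k\}\in\mathcal{E}$, contradicting the chordlessness of $C$. This part is short and purely formal.

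For the ``only if'' direction I would proceed by induction on $|X|$, with the inductive step provided by the existence of a \emph{simplicial vertex}, that is, a vertex $v$ whose neighbors form a clique. Concretely, I would first establish the key lemma: \emph{every chordal graph with at least one vertex has a simplicial vertex, and if it is not itself a clique it has two nonadjacent simplicial vertices.} Granting this, if $v$ is simplicial in a chordal $G$ I set $v_1=v$; then $G\setminus\{v\}$ is still chordal (chordality is hereditary, since any would-be chordless long cycle in $G\setminus\{v\}$ would already sit in $G$), and by induction admits a PEO $v_2<\dots<v_n$. The concatenation $v_1<v_2<\dots<v_n$ is then a PEO for $G$: for $i=1$ the set $C[1]$ is contained in the neighborhood of $v$, hence a clique, and for $i\geq 2$ the property is inherited from the PEO on $G\setminus\{v\}$.

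The hard part is therefore the existence of a simplicial vertex. I would prove it via minimal vertex separators, using the classical fact that in a chordal graph every minimal $a,b$-separator $S$ is a clique. The proof of this sublemma is the standard one: pick $x,y\in S$ and, using minimality, find neighbors of $x$ and $y$ in each of the two components of $G\setminus S$ separated by $a$ and $b$; then build a shortest cycle through $x,y$ using these components. Any chord of this cycle must land on the $x$--$y$ edge (everything else is forbidden by the separator structure and the chosen minimality), forcing $\{x,y\}\in\mathcal{E}$. Once minimal separators are known to be cliques, one argues that in a non-complete chordal graph $G$ with a minimal separator $S$ between two vertices $a,b$, each of the two components $A,B$ of $G\setminus S$ together with $S$ yields a smaller chordal subgraph; by induction each contains a simplicial vertex, and a simplicial vertex lying in $A$ (respectively $B$) and not in $S$ is still simplicial in the full graph $G$, producing two nonadjacent simplicial vertices. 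The base case where $G$ is a clique is trivial since every vertex is simplicial.

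The main obstacle is really just the minimal-separator-is-a-clique lemma; once that is in hand, the induction on $|X|$ assembles the PEO essentially for free, and the ``if'' direction is a one-paragraph observation.
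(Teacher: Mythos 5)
Your proof is correct, and it is the classical argument (Dirac's simplicial-vertex lemma via minimal separators being cliques, plus the easy minimal-index argument for the converse). The paper does not actually prove this proposition --- it simply cites Theorem 1 of the reference on chordal graphs --- so what you have written is essentially a self-contained reconstruction of the proof that the paper outsources to the literature. The only point you gloss over slightly is why the smaller graph $G[A\cup S]$ yields a simplicial vertex lying in $A$ rather than in $S$, but since you state the inductive hypothesis in the strong form (two nonadjacent simplicial vertices when the graph is not complete) and $S$ is a clique, this is immediate and the argument is complete.
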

\begin{proof}
This is Theorem 1 in \cite{chordal}.
\end{proof}

\section{Operator systems associated with finite tolerance relations}
In this section we prove our main result, which is a concrete realization for the duals of a certain class of finite-dimensional operator systems. We can then use this realization to compute the extremal rays, $C^*$-envelopes and propagation numbers of these duals.

Let $X$ be a finite set and let $\mathcal{R}\subseteq X\times X$ be a reflexive and symmetric relation. Such a relation is called a \emph{tolerance relation}. A finite set with a tolerance relation $\mathcal{R}$ can be interpreted as a graph which we denote by $G(\mathcal{R})$. A tolerance relation $\mathcal{R}$ on $X$ gives rise to a finite-dimensional operator system $E(\mathcal{R})\subseteq M_n(\C)$ as follows. We may assume without loss of generality that $X=\{1,\dots,n\}$ and then we define
\begin{equation}
    E(\mathcal{R}) = \{(x_{ij})\in M_n(\C)\mid x_{ij}=0\text{ if }(i,j)\notin \mathcal{R}\}.
\end{equation}

These operator systems are the operator systems associated to tolerance relations on finite sets that are considered in \cite{toleranceRelations}. The $C^*$-envelopes and propagation numbers of these operator systems are known. Indeed, a slight generalization of Proposition 3.7 in \cite{toleranceRelations} states that
\begin{equation}
    \Cenv(E(\mathcal{R})) = \bigoplus_{K\in\mathcal{K}} M_{|K|}(\C),
\end{equation}
where $\mathcal{K}$ is the set of equivalence classes of the equivalence relation generated by $\mathcal{R}$. Moreover $\prop(E(\mathcal{R}))=\diam(G(\mathcal{R}))$, where $\diam(G(\mathcal{R}))$ denotes the graph diameter. Interesting examples of operator systems associated to tolerance relations on finite sets are furnished by operator systems for finite partial partitions, which are discussed in Subsection 4.1 of \cite{toleranceRelations}. 

Since the operator system $E(\mathcal{R})$ is already fairly well understood, the next logical step is to study the dual $E(\mathcal{R})^d$. We would like to find a concrete realization for $E(\mathcal{R})^d$. In general this is a difficult problem, however if the graph $G(\mathcal{R})$ is chordal we will see that one can find a concrete realization. We then use this concrete realization to determine the $C^*$-envelope and the propagation number of $E(\mathcal{R})^d$ if $G(\mathcal{R})$ is chordal.

\subsection{A concrete realization}
If $\mathcal{R}$ is a tolerance relation on a finite set $X$, the space $M_n(E(\mathcal{R}))$ is naturally an operator system and this operator system again arises from a tolerance relation.
\begin{definition}
    If $\mathcal{R}$ is a tolerance relation on the finite set $X$, let $\mathcal{R}^n$ denote the tolerance relation on $\widetilde{X}=X\times\{1,\dots, n\}$ given by 
    \begin{equation}
        \mathcal{R}^n = \{((x,i),(y,j))\in \widetilde{X}\times \widetilde{X}\mid (x,y)\in\mathcal{R}\}.
    \end{equation}
\end{definition}
It is clear that we have a canonical identification $M_n(E(\mathcal{R}))\cong E(\mathcal{R}^n)$.
The set of maximal cliques of the graph $G(\mathcal{R})$ is denoted by $\mathcal{C}$.
\begin{lemma}\label{lem:cliques}
Let $n\in\N$ be given and let $\widetilde{\mathcal{C}}$ denote the set of maximal cliques in the graph $G(\mathcal{R}^n)$, then there is a canonical bijection $\theta: \mathcal{C}\to\widetilde{\mathcal{C}}$ given by $\theta(C) = \{ (x,i)\in X\times\{1,\dots,n\}\mid x\in C\}$.
\end{lemma}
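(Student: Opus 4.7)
The plan is to verify directly from the definitions that $\theta$ lands in $\widetilde{\mathcal{C}}$ and is bijective, using the two easy facts that $\mathcal{R}^n$ depends only on the $X$-coordinates and that $\mathcal{R}$ is reflexive.

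First I would check that $\theta(C)$ is always a clique in $G(\mathcal{R}^n)$: for any $(x,i),(y,j)\in\theta(C)$ we have $x,y\in C$, hence $(x,y)\in\mathcal{R}$ by the clique property of $C$, and therefore $((x,i),(y,j))\in\mathcal{R}^n$ by the very definition of $\mathcal{R}^n$. Next I would show $\theta(C)$ is maximal: if some $(z,k)\notin\theta(C)$ could be adjoined to $\theta(C)$ while preserving the clique property, then $z\notin C$ (else $(z,k)$ would already be in $\theta(C)$), and for every $x\in C$ and any $i$ the pair $((x,i),(z,k))$ lies in $\mathcal{R}^n$, forcing $(x,z)\in\mathcal{R}$; this would make $C\cup\{z\}$ a strictly larger clique in $G(\mathcal{R})$, contradicting the maximality of $C$.

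For injectivity I would use the projection $\pi:\widetilde{X}\to X$ onto the first coordinate: the set $C$ is recovered from $\theta(C)$ as $\pi(\theta(C))$, so $\theta(C)=\theta(C')$ implies $C=C'$. For surjectivity, given $\widetilde{C}\in\widetilde{\mathcal{C}}$ I would set $C:=\pi(\widetilde{C})$ and check (a) $C$ is a clique in $G(\mathcal{R})$, which is immediate from $\mathcal{R}^n$ depending only on the $X$-coordinates, and (b) $\widetilde{C}=\theta(C)$. The nontrivial inclusion is $\theta(C)\subseteq\widetilde{C}$: for any $x\in C$ and any $k\in\{1,\dots,n\}$, and any $(y,j)\in\widetilde{C}$, one has $y\in C$, so $(x,y)\in\mathcal{R}$ (using reflexivity when $y=x$), hence $((x,k),(y,j))\in\mathcal{R}^n$; thus $\widetilde{C}\cup\{(x,k)\}$ is still a clique, and by maximality of $\widetilde{C}$ we must already have $(x,k)\in\widetilde{C}$. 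Finally, $C$ itself is maximal in $G(\mathcal{R})$, since any strict clique extension $C\cup\{z\}$ would yield $\theta(C)\cup\{(z,1)\}=\widetilde{C}\cup\{(z,1)\}$ as a strict clique extension of $\widetilde{C}$, contradicting maximality.

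The only delicate point is the surjectivity step, specifically showing that $\widetilde{C}=\theta(\pi(\widetilde{C}))$ rather than merely the inclusion $\widetilde{C}\subseteq\theta(\pi(\widetilde{C}))$. This is where one genuinely uses the fact that the relation $\mathcal{R}^n$ is entirely determined by the $X$-coordinates and that $\mathcal{R}$ is reflexive, so that adjoining any copy $(x,k)$ of an existing $x\in\pi(\widetilde{C})$ preserves the clique property and is therefore forced into $\widetilde{C}$ by maximality. Everything else is a direct unwinding of the definitions.
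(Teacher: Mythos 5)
Your proof is correct and follows essentially the same route as the paper: both arguments directly unwind the definitions, using that $\mathcal{R}^n$ depends only on the $X$-coordinates together with the maximality of $\widetilde{C}$ to get surjectivity. The paper's version is terser (it enlarges $\pi(\widetilde{C})$ to a maximal clique $C$ and concludes from $\widetilde{C}\subseteq\theta(C)$), but the content is the same, and your extra care with the reflexivity point and the maximality of $\pi(\widetilde{C})$ is fine.
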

\begin{proof}
It is clear that $\theta$ is well-defined and injective. To see that it is surjective, let $\widetilde{C}\in\widetilde{\mathcal{C}}$. Now let $C^\prime=\{x\in X\mid \exists i, (x,i)\in\widetilde{C}\}$, then it is clear that $C^\prime$ is a clique in $X$ and hence it is contained in some maximal clique $C\in\mathcal{C}$. One easily sees that $\widetilde{C}\subseteq \theta(C)$ and hence by maximality of $\widetilde{C}$ we must have $\widetilde{C}=\theta(C)$ and we conclude that $\theta$ is surjective.
\end{proof}

We use the canonical vector space identification $E(\mathcal{R})^d\cong E(\mathcal{R})\subseteq M_n(\C)$ to view $E(\mathcal{R})^d$ as a subspace of $M_n(\C)$. 
\begin{theorem}\label{thm:main}
Let $\mathcal{R}$ be a tolerance relation on a finite set $X$ with chordal graph $G(\mathcal{R})$. Then the map 
\begin{equation}
    \Phi: E(\mathcal{R})^d\to \bigoplus_{C\in \mathcal{C}} M_{|C|}(\C):(x_{ij})\mapsto ((x_{ij})_{i,j\in C})_{C\in\mathcal{C}}
\end{equation}
is a complete order embedding.
\end{theorem}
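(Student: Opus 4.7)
The plan is to use Corollary \ref{cor:dual}, which realises $M_m(E(\mathcal{R})^d)_+$ as the set of Hermitian $M \in M_m(E(\mathcal{R}))$ that admit a positive completion $M + N \geq 0$ with $N \in M_m(E(\mathcal{R})^\perp)$ --- concretely, $N$ supplies arbitrary Hermitian values at the entries indexed by pairs outside $\mathcal{R}$. The theorem then reduces to the positive semidefinite completion problem on $G(\mathcal{R}^m)$, which by Theorem \ref{thm:chordal} is governed by chordality.

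First I would dispose of injectivity: every diagonal position $(i,i)$ lies in some maximal clique, and every specified off-diagonal position $(i,j)\in\mathcal{R}$ forms a $2$-clique contained in some maximal clique, so each entry of $M \in E(\mathcal{R})$ is recovered from some block $\Phi(M)_C$. Next, complete positivity of $\Phi$, which holds without any chordality assumption: after identifying $M_m(E(\mathcal{R})) \cong E(\mathcal{R}^m)$ and $M_m(E(\mathcal{R})^\perp) \cong E(\mathcal{R}^m)^\perp$, a witness $M+N\geq 0$ with $N\in E(\mathcal{R}^m)^\perp$ has $N$ vanishing on every pair in $\mathcal{R}^m$, hence on every clique of $G(\mathcal{R}^m)$. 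By Lemma \ref{lem:cliques} the maximal cliques of $G(\mathcal{R}^m)$ are precisely the sets $\widetilde{C} = \theta(C)$ for $C \in \mathcal{C}$, and the principal submatrix $(M+N)_{\widetilde{C}} = M_{\widetilde{C}}$ corresponds under the canonical identification to $\Phi_m(M)_C$. Positivity of $M+N$ passes to principal submatrices, giving $\Phi_m(M)_C \geq 0$ for every $C$.

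The order-embedding direction is where the real work happens. Assuming $\Phi_m(M) \geq 0$, the matrix $M$ --- viewed as a partial Hermitian matrix on $G(\mathcal{R}^m)$ whose specified entries are those indexed by $\mathcal{R}^m$ --- is partially positive by Lemma \ref{lem:cliques}. To invoke Theorem \ref{thm:chordal} I must confirm that $G(\mathcal{R}^m)$ inherits chordality from $G(\mathcal{R})$. Using Proposition \ref{prop:elimination}, I would take a perfect elimination ordering $v_1 < \cdots < v_n$ of $G(\mathcal{R})$ and order $G(\mathcal{R}^m)$ lexicographically as $(v_1,1) < \cdots < (v_1,m) < (v_2,1) < \cdots$. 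A short check shows that the later closed neighbourhood of $(v_i,k)$ in $G(\mathcal{R}^m)$ has $X$-projection equal to the clique $C[i]$ in $G(\mathcal{R})$, and is itself a clique since each fibre over a vertex is complete (by reflexivity of $\mathcal{R}$). So $G(\mathcal{R}^m)$ is chordal and Theorem \ref{thm:chordal} yields a positive completion of $M$ --- that is, some $N \in E(\mathcal{R}^m)^\perp$ with $M + N \geq 0$ --- from which $M \in M_m(E(\mathcal{R})^d)_+$ follows via Corollary \ref{cor:dual}.

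The main obstacle is therefore the chordality-preservation step for the blown-up graph $G(\mathcal{R}^m)$; once that is secured, the theorem is essentially a translation of definitions combined with the chordal positive semidefinite completion result.
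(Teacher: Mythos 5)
Your proof is correct and follows essentially the same route as the paper: Corollary \ref{cor:dual} translates positivity in $M_m(E(\mathcal{R})^d)$ into a positive semidefinite completion problem on $G(\mathcal{R}^m)$, Lemma \ref{lem:cliques} matches the blocks of $\Phi_m$ with the maximal cliques of the blown-up graph, and Theorem \ref{thm:chordal} closes the order-embedding direction. The one place you go beyond the paper is in explicitly verifying that $G(\mathcal{R}^m)$ inherits chordality, via the lexicographic refinement of a perfect elimination ordering of $G(\mathcal{R})$ and Proposition \ref{prop:elimination} --- the paper merely asserts that $\Phi_m$ ``is of the same form as $\Phi$'' and reuses the $m=1$ argument, leaving this (necessary, and correctly proved by you) chordality-preservation step implicit.
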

\begin{proof}
Let us first prove that $\Phi$ is an order embedding. Note that Corollary \ref{cor:dual} implies that 
\begin{equation}\label{eq:thm2}
    E(\mathcal{R})^d_+ = \{ M\in E(\mathcal{R})^d \mid \exists N\in E(\mathcal{R})^\perp,\: M+N\in M_n(\C)_+\}.
\end{equation}
Let $(x_{ij})\in E(\mathcal{R})^d_+$, then there exists $(y_{ij})\in E(\mathcal{R})^\perp$ such that $(x_{ij}+y_{ij})$ is a positive matrix. Now if $C\in\mathcal{C}$, then the principal submatrix $(x_{ij})_{i,j\in C}$ is the same as the principal submatrix $(x_{ij}+y_{ij})_{i,j\in C}$, since $y_{ij}=0$ if $ij$ is an edge in $G(\mathcal{R})$. This implies that $(x_{ij})_{i,j\in C}$ is a principal submatrix of a positive matrix and hence positive. We conclude that $\Phi$ is positive.

It is clear that $\Phi$ is injective. To see that it is also an order embedding, observe that we can interpret $E(\mathcal{R})^d$ as consisting of partial matrices with graph $G(\mathcal{R})$. From this perspective, if we let $(x_{ij})\in E(\mathcal{R})^d$, then $\Phi((x_{ij}))$ is positive precisely when $(x_{ij})$ is partially positive. Since $G(\mathcal{R})$ is chordal, we can use Theorem \ref{thm:chordal} to see that if $\Phi((x_{ij}))$ is positive then the partial matrix $(x_{ij})$ must have a positive completion, which can be written as $(x_{ij})+(y_{ij})$ for some $(y_{ij})\in E(\mathcal{R})^\perp$. By Equation \ref{eq:thm2} this means that in this case $(x_{ij})$ must have been positive. This shows that $\Phi$ is an order embedding.

To see that $\Phi$ is in fact a complete order embedding, let $n\in \N$. Then we have to show that 
\begin{equation}
    \Phi_n: M_n(E(\mathcal{R})^d)\to M_n\left(\bigoplus_{C\in \mathcal{C}} M_{|C|}(\C)\right)
\end{equation}
is an order embedding. We have the identifications $M_n(E(\mathcal{R})^d)\cong M_n(E(\mathcal{R}))^d\cong E(\mathcal{R}^n)^d$. Using Lemma \ref{lem:cliques} we also have the identifications
\begin{equation}
    M_n\left(\bigoplus_{C\in \mathcal{C}} M_{|C|}(\C)\right) \cong \bigoplus_{C\in \mathcal{C}} M_n(M_{|C|}(\C)) \cong \bigoplus_{C\in \mathcal{C}} M_{n|C|}(\C) \cong \bigoplus_{\widetilde{C}\in \widetilde{\mathcal{C}}} M_{|\widetilde{C}|}(\C).
\end{equation}
Combining this, we see that we can view $\Phi_n$ as a map $\widetilde{\Phi}:E(\mathcal{R}^n)^d \to \bigoplus_{\widetilde{C}\in \widetilde{\mathcal{C}}} M_{|\widetilde{C}|}(\C)$. This $\widetilde{\Phi}$ is of the same form as the map $\Phi$ and hence by the arguments in the previous two paragraphs it is an order embedding.
\end{proof}
\begin{remark}
Equation \ref{eq:thm2} shows that determining the dual of an operator system associated to a tolerance relation on a finite set is equivalent to a positive semi-definite completion problem for certain partial matrices as described in \cite{psd}. Since this problem remains unsolved in full generality despite receiving significant attention, it is unlikely that we will easily be able to describe the dual of an arbitrary operator system associated to a tolerance relation on a finite set.
\end{remark}

\begin{remark}
Note that even though we choose to identify the vector spaces underlying the operator systems $E(\mathcal{R})$ and $E(\mathcal{R})^d$, the associated cones of positive elements are quite different. The cone $E(\mathcal{R})_+$ consists of matrices which are positive semi-definite in the classical linear algebraic sense, while we have just seen that $E(\mathcal{R})^d_+$ consists of partial matrices which are partially positive.
\end{remark}

\subsection{Extremal rays}
If the graph $G(\mathcal{R})$ is chordal, then using the description of the order on $E(\mathcal{R})^d$ given by Equation~\ref{eq:thm2}, we can determine the extremal rays of $E(\mathcal{R})^d_+$ and hence the pure states of $E(\mathcal{R})$. Let us write $\pi:M_n(\C)\to E^d$ for the orthogonal projection (with respect to the Hilbert-Schmidt inner product). Because of Equation \ref{eq:thm2} it is clear that $E^d_+=\pi(M_n(\C)_+)$ if $G(\mathcal{R})$ is chordal. In this case we will call elements of $E^d_+$ partially positive matrices, since they can be interpreted as partial matrices with graph $G(\mathcal{R})$ which are partially positive.

\begin{lemma}\label{lem:extremal}
The extremal rays of $M_n(\C)_+$ are precisely those generated by rank one matrices.
\end{lemma}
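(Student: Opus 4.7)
The plan is to prove the two implications separately, relying only on elementary facts about positive semi-definite matrices.

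First I would show that any rank-one positive matrix generates an extremal ray. Suppose $A = vv^*$ for some nonzero $v \in \C^n$, and write $A = B + C$ with $B, C \in M_n(\C)_+$. The key observation is that $\ker A \subseteq \ker B \cap \ker C$: for any $x \in \ker A$, the identity $0 = \langle Ax, x\rangle = \langle Bx, x\rangle + \langle Cx, x\rangle$ combined with $\langle Bx,x\rangle, \langle Cx,x\rangle \geq 0$ forces $\langle Bx,x\rangle = 0$, which for the positive matrix $B$ implies $Bx = 0$ (using $\|B^{1/2}x\|^2 = \langle Bx,x\rangle = 0$). Therefore $\im(B) \subseteq \im(A) = \C v$. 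A positive matrix whose image is spanned by a single vector $v$ must be of the form $\lambda vv^*$ for some $\lambda \geq 0$, so $B = \lambda A$ and similarly $C = \mu A$, proving that the ray generated by $A$ is extremal.

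Next I would show the converse: if $A$ generates an extremal ray of $M_n(\C)_+$, then $A$ has rank one. Using the spectral theorem, write $A = \sum_{i=1}^r \lambda_i v_i v_i^*$ where the $v_i$ are orthonormal eigenvectors and $\lambda_i > 0$, with $r = \mathrm{rank}(A)$. If $r \geq 2$, then $A = B + C$ where $B = \lambda_1 v_1 v_1^*$ and $C = \sum_{i=2}^r \lambda_i v_i v_i^*$ are both nonzero positive matrices. Since $\mathrm{rank}(B) = 1 < r = \mathrm{rank}(A)$, $B$ cannot be a positive scalar multiple of $A$, contradicting extremality of the ray. Hence $r = 1$.

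The argument is essentially bookkeeping with spectral data, so I don't anticipate a serious obstacle. The only step requiring mild care is the linear-algebra fact that a positive matrix whose image is contained in a one-dimensional subspace $\C v$ is a nonnegative multiple of $vv^*$; this follows immediately from writing such a matrix in an orthonormal basis whose first vector is $v/\|v\|$, since positivity forces all other rows and columns to vanish.
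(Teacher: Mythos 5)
Your proof is correct and complete; the paper itself only cites this as ``well known,'' and your argument is precisely the standard one (the kernel-inclusion trick $\ker A\subseteq\ker B$ for the extremality of rank-one generators, and the spectral decomposition for the converse). No gaps — note only that the paper's definition of ray already excludes the zero matrix, so your rank $r\geq 1$ assumption in the converse is justified.
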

\begin{proof}
This is well known.
\end{proof}

\begin{theorem}\label{thm:extremal}
If $\mathcal{R}$ is a tolerance relation on a finite set $X$ with chordal graph $G(\mathcal{R})$, then the extremal rays of $E^d_+$ are generated by those $0\neq (x_{ij})\in E^d_+$ that satisfy: 
\begin{enumerate}
    \item for every maximal clique $C\in\mathcal{C}$ the matrix $(x_{ij})_{i,j\in C}$ is of rank one or zero, 
    \item the induced subgraph $H$ of $G(\mathcal{R})$, induced by the vertices $i\in X$ such that $x_{ii}\neq 0$, is connected.
\end{enumerate} 
\end{theorem}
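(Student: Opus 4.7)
My plan is to handle the two directions separately, using the description $E^d_+ = \pi(M_n(\C)_+)$ established just before Lemma \ref{lem:extremal}, together with the fact (Lemma \ref{lem:extremal}) that the extremal rays of $M_n(\C)_+$ are generated by rank-one matrices.

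For the forward direction, let $0 \neq (x_{ij}) \in E^d_+$ span an extremal ray. Pick $Y \in M_n(\C)_+$ with $\pi(Y) = (x_{ij})$, spectrally decompose $Y = \sum_k v_k v_k^*$, and note that $(x_{ij}) = \sum_k \pi(v_k v_k^*)$ expresses $(x_{ij})$ as a sum of elements of $E^d_+$. Extremality forces each summand to be a nonnegative multiple of $(x_{ij})$, so $(x_{ij}) = \mu\, \pi(vv^*)$ for some $v \in \C^n$ and $\mu > 0$. Condition 1 then holds because $(x_{ij})_{i,j\in C} = \mu(v_i \overline{v_j})_{i,j\in C}$ has rank at most one. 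For condition 2, set $V = \{i : v_i \neq 0\}$ and suppose for contradiction that the induced subgraph $H$ splits as $V_1 \sqcup V_2$ with no edges between the parts. Splitting $v = v^{(1)} + v^{(2)}$ along $V_1, V_2$, the cross terms $v^{(1)}(v^{(2)})^*$ and $v^{(2)}(v^{(1)})^*$ are supported on pairs $(i,j)$ with $i \in V_1, j \in V_2$ or vice versa; by hypothesis no such pair belongs to $\mathcal{R}$, so $\pi$ kills the cross terms and $(x_{ij}) = \mu\pi(v^{(1)}(v^{(1)})^*) + \mu\pi(v^{(2)}(v^{(2)})^*)$ decomposes $(x_{ij})$ nontrivially into elements of $E^d_+$ with disjoint supports, contradicting extremality.

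For the converse, assume $(x_{ij})$ satisfies conditions 1 and 2 and write $(x_{ij}) = (y_{ij}) + (z_{ij})$ with both summands in $E^d_+$. For each maximal clique $C$, write $(x_{ij})_{i,j\in C} = u_C u_C^*$; since the positive summands $(y_{ij})_{i,j\in C}$ and $(z_{ij})_{i,j\in C}$ of this rank-one matrix must have range in $\mathrm{span}(u_C)$, we get $(y_{ij})_{i,j\in C} = \alpha_C u_C u_C^*$ for some $\alpha_C \in [0,1]$. If two maximal cliques $C, C'$ share a vertex $i \in V$, comparing $(i,i)$-entries gives $\alpha_C = \alpha_{C'}$ since $x_{ii} > 0$. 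Using condition 2, any path in $H$ can be covered by maximal cliques each containing both endpoints of one of its edges, yielding for any two maximal cliques meeting $V$ a chain of maximal cliques whose consecutive pairs share a $V$-vertex, forcing a common value $\alpha$. Cliques $C$ disjoint from $V$ have $x_{ii} = 0$ for all $i \in C$, which forces $(y_{ij})_{i,j\in C} = 0$ by positivity; and if $(i,j) \in \mathcal{R}$ with one of $i, j$ outside $V$, the rank-one structure on any enclosing clique gives $x_{ij} = 0$ and $y_{ij} = 0$. Therefore $y_{ij} = \alpha x_{ij}$ for every $(i,j) \in \mathcal{R}$, and similarly $z_{ij} = (1-\alpha) x_{ij}$, so $(x_{ij})$ spans an extremal ray.

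The heart of the argument is the clique-chain propagation in the converse: extracting a local rank-one constraint on each clique block is routine, but assembling the local constants $\alpha_C$ into a single global $\alpha$ is where connectedness of $H$ is used essentially. The other ingredients — the spectral reduction to $\pi(vv^*)$, the splitting of $v$ across components of $H$, and the vanishing on cliques disjoint from $V$ — are formal consequences of positivity and the identity $E^d_+ = \pi(M_n(\C)_+)$.
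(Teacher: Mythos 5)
Your proof is correct and follows essentially the same route as the paper's: reduce an extremal generator to the projection of a rank-one matrix $\mu\,\pi(vv^*)$, use a disconnection of $H$ to split it into two nonzero summands in $E^d_+$, and in the converse use the rank-one clique blocks together with connectedness of $H$ to propagate a single scalar. The only cosmetic difference is that you index the scalars $\alpha_C$ by maximal cliques and chain them through shared vertices of $H$, whereas the paper indexes scalars $\lambda_i$ by vertices and propagates along edges of $H$; the two bookkeeping schemes are equivalent.
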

\begin{proof}
This proof is inspired by the proof of Theorem 3.2 in \cite{extremal}, where we correct for a slight gap in the proof. Assume $0\neq (x_{ij})\in E^d_+$ generates an extremal ray. Pick a positive $(y_{ij})\in \pi^{-1}((x_{ij}))$. There exist $(y^1_{ij}),(y^2_{ij})\in M_n(\C)_+$ such that $(y^1_{ij})$ is of rank one, $\pi((y^1_{ij}))\neq 0$ and $(y_{ij})=(y_{ij}^1+y_{ij}^2)$. Now clearly $(x_{ij})=\pi((y^1_{ij}))+\pi((y^2_{ij}))$ and since $(x_{ij})$ generates an extremal ray, this implies that $(x_{ij})$ is a multiple of $\pi((y^1_{ij}))$. Let $C\in\mathcal{C}$, then $(x_{ij})_{i,j\in C}$ is proportional to the principal submatrix $(y^1_{ij})_{i,j\in C}$ of a rank one matrix and hence has rank one or zero.

Suppose $H$ is not connected and decompose $H$ into two nonempty subsets $H=J_1\cup J_2$ such that there is no edge in $H$ from a vertex in $J_1$ to a vertex in $J_2$. Define $(x^1_{ij}),(x^2_{ij})\in E^d_+$ by $x_{ij}^k=x_{ij}$ if $i,j\in J_k$ and $x_{ij}^k=0$ otherwise, then $(x^1_{ij})$ and $(x^2_{ij})$ are clearly not multiples of $(x_{ij})$. Note that $x_{kl}=0$ unless $k,l\in H$, because $(x_{ij})$ is partially positive. Indeed, if $k\notin H$ (the case $l\notin H$ is similar) then $x_{kk}=0$ and by picking a maximal clique $C$ containing $k$ and $l$, we find that $x_{kl}$ is an entry in a positive matrix $(x_{ij})_{i,j\in C}$ whose diagonal entry $x_{kk}$ is zero and hence we must have that $x_{kl}=0$. Also note that if $k\in J_1$ and $l\in J_2$ or vice versa, then $(k,l)\notin\mathcal{R}$ so $x_{kl}=0$. It follows that $(x_{ij})=(x^1_{ij})+(x^2_{ij})$, contradicting that $(x_{ij})$ generates an extremal ray. We conclude that $H$ must be connected.

Now suppose $0\neq (x_{ij})\in E^d_+$ satisfies the two conditions of the theorem and assume $(x_{ij})=(x^1_{ij})+(x^2_{ij})$ for some $(x^1_{ij}),(x^2_{ij})\in E^d_+$. We will prove that $(x^1_{ij})$ is a multiple of $(x_{ij})$, proving that $(x_{ij})$ generates an extremal ray. Clearly there exist scalars $\lambda_{i}$ such that $x^1_{ii}=\lambda_i x_{ii}$ for all $i$. Indeed if $x_{ii}=0$ then $x^1_{ii}=0$, because $x^1_{ii},x^2_{ii}\geq 0$. If $kl$ is an edge in $H$, let $C\in\mathcal{C}$ be a maximal clique containing $k$ and $l$. By assumption the matrix $(x_{ij})_{i,j\in C}=(x_{ij}^1)_{i,j\in C}+(x_{ij}^2)_{i,j\in C}$ is of rank (at most) one and since $(x_{ij}^1)_{i,j\in C}$ and $(x_{ij}^2)_{i,j\in C}$ are positive ($(x_{ij}^1)$ and $(x_{ij}^2)$ are partially positive) using Lemma \ref{lem:extremal} it follows that $(x_{ij}^1)_{i,j\in C}$ is a multiple of $(x_{ij})_{i,j\in C}$, which implies that $\lambda_k=\lambda_l$ and $\lambda_k(x_{ij})_{i,j\in C}=(x_{ij}^1)_{i,j\in C}$. Since we have assumed $H$ to be connected, this implies that $\lambda_i=\lambda_j$ for all vertices $i$ and $j$ in $H$ and we now denote this scalar by $\lambda$. Now let $(k,l)\in \mathcal{R}$ then $x_{kl}^1=x_{kl}=0$ unless $k$ and $l$ are vertices in $H$, because $(x_{ij})$ and $(x_{ij}^1)$ are partially positive. If $k,l\in H$, then by picking a maximal clique $C\in\mathcal{C}$ containing both $k$ and $l$ and using that $(x_{ij}^1)_{i,j\in C}=\lambda(x_{ij})_{i,j\in C}$ we see that $x_{kl}^1=\lambda x_{kl}$. We conclude that $(x_{ij}^1)=\lambda(x_{ij})$, which proves that $(x_{ij})$ generates an extremal ray.
\end{proof}
\begin{remark}\label{rmk:projection}
This proof also shows that if $0\neq (x_{ij})\in E^d_+$ generates an extremal ray, then it is the projection of some positive rank one matrix $(y_{ij})\in M_n(\C)_+$.
\end{remark}
\begin{remark}
Note that this theorem is essentially a reformulation of Proposition 3.11 in \cite{toleranceRelations}. A similar result has been obtained in \cite[Proposition 27]{ALL21}.
\end{remark}

\subsection{The \texorpdfstring{$C^*$}{C*}-envelope}
In this subsection, we use our concrete realization for the dual $E(\mathcal{R})^d$ for tolerance relations $\mathcal{R}$ with chordal graphs $G(\mathcal{R})$, to compute the $C^*$-envelope $\Cenv(E(\mathcal{R})^d)$.
\begin{lemma}\label{lem:C*}
If $\mathcal{R}$ is a tolerance relation on a finite set $X$ with chordal graph $G(\mathcal{R})$ and \sloppy{${\Phi: E(\mathcal{R})^d\to \bigoplus_{C\in \mathcal{C}} M_{|C|}(\C)}$} is the complete order embedding of Theorem \ref{thm:main}, then 
\begin{equation}
    C^*(\Phi(E(\mathcal{R})^d))=\bigoplus_{C\in \mathcal{C}} M_{|C|}(\C).
\end{equation}
\end{lemma}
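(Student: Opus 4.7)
The plan is to compute $C^*(\Phi(E(\mathcal{R})^d))$ via the double commutant theorem inside the finite-dimensional ambient $C^*$-algebra $A := \bigoplus_{C \in \mathcal{C}} M_{|C|}(\C)$. Note first that $\Phi(E(\mathcal{R})^d)$ is unital (it contains $\Phi(I_n) = (I_{|C|})_{C \in \mathcal{C}}$) and self-adjoint, and that $A$ is finite-dimensional, so $C^*(\Phi(E(\mathcal{R})^d))$ agrees with the $*$-algebra generated and hence with the double commutant $\Phi(E(\mathcal{R})^d)''$ taken inside $A$. It therefore suffices to show that the commutant $\Phi(E(\mathcal{R})^d)'$ equals the center $Z(A) = \bigoplus_{C \in \mathcal{C}} \C \cdot I_{|C|}$, from which one concludes $\Phi(E(\mathcal{R})^d)'' = Z(A)' = A$.

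To carry out the commutant computation, I would work with the explicit generators $\Phi(E_{ij})$ for $(i,j) \in \mathcal{R}$, which lie in the image because $E_{ij} \in E(\mathcal{R})$ exactly when $(i,j) \in \mathcal{R}$. Writing $E_{ij}^C$ for the matrix unit in $M_{|C|}(\C)$, one has $\Phi(E_{ij})_C = E_{ij}^C$ if both $i,j \in C$ and $\Phi(E_{ij})_C = 0$ otherwise. Now pick any $y = (y_C)_{C \in \mathcal{C}} \in A$ with $[y, \Phi(E(\mathcal{R})^d)] = 0$. Commuting $y$ with each diagonal generator $\Phi(E_{ii})$ forces $y_C$ to commute with $E_{ii}^C$ for every $i \in C$, so each $y_C$ is diagonal, say $y_C = \sum_{i \in C} \lambda_{C,i} E_{ii}^C$. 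Then for any distinct $i,j \in C$, the fact that $C$ is a clique means $(i,j) \in \mathcal{R}$, so $\Phi(E_{ij})_C = E_{ij}^C$, and the identity $[y_C, E_{ij}^C] = (\lambda_{C,i} - \lambda_{C,j}) E_{ij}^C = 0$ forces $\lambda_{C,i} = \lambda_{C,j}$. Hence $y_C$ is a scalar multiple of $I_{|C|}$, so $y \in Z(A)$; the reverse inclusion $Z(A) \subseteq \Phi(E(\mathcal{R})^d)'$ is immediate.

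I do not expect any serious obstacle. The essential structural point is that within each block $M_{|C|}(\C)$ the map $\Phi$ already hits every matrix unit $E_{ij}^C$ with $i,j \in C$, simply because the full clique $C$ sits inside $\mathcal{R}$; this is enough to pin down the commutant block by block, with no cross-block interaction required. Chordality enters only indirectly, through Theorem \ref{thm:main}, to guarantee that $\Phi$ is a complete order embedding into exactly this direct sum in the first place.
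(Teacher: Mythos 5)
Your reduction to a bicommutant computation is a reasonable strategy, but as written it has a genuine gap: you compute only the \emph{relative} commutant $\Phi(E(\mathcal{R})^d)'\cap A$ inside $A=\bigoplus_{C\in\mathcal{C}}M_{|C|}(\C)$, since you test only block-diagonal elements $y=(y_C)_{C\in\mathcal{C}}\in A$. The double commutant theorem, which is what converts ``commutant equals the centre'' into ``the generated $C^*$-algebra is all of $A$'', requires the commutant taken in $B(H)$ with $H=\bigoplus_C\C^{|C|}$, and the relative bicommutant $(S'\cap A)'\cap A$ can strictly contain $C^*(S)$. Concretely, the diagonal copy $B=\{(x,x):x\in M_2(\C)\}\subseteq M_2(\C)\oplus M_2(\C)$ is a unital $*$-subalgebra whose relative commutant is exactly the centre $\C\oplus\C$, yet $C^*(B)=B\neq A$. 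This is precisely the danger in the lemma at hand: the real content is to rule out that $C^*(\Phi(E(\mathcal{R})^d))$ sits ``diagonally'' across two distinct blocks $M_{|C|}(\C)$ and $M_{|D|}(\C)$, and your within-block argument never sees this. Indeed, if one ran your argument with a repeated clique in the index set (so that $\Phi'$ has two identical components), every block would still have scalar relative commutant and you would ``conclude'' that the diagonal subalgebra generates the full direct sum, which is false. The only feature separating the true statement from this false one is that distinct maximal cliques are incomparable, and your proof never invokes it. (The paper's proof, by contrast, spends most of its effort on exactly this case, excluding the diagonal possibility by a dimension count on $(\tau_C\oplus\tau_D)(\Phi(E(\mathcal{R})^d))$.)

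The gap is repairable along your lines by computing the commutant of $S=\Phi(E(\mathcal{R})^d)$ in all of $B(H)$. Index the standard basis of $H$ by pairs $(C,i)$ with $i\in C$. Commuting with the projections $\Phi(E_{ii})$ forces $y_{(C,i),(D,j)}=0$ unless $i=j$. Then, for distinct maximal cliques $C\neq D$ with $i\in C\cap D$, maximality gives some $j\in D\setminus C$; since $D$ is a clique, $(i,j)\in\mathcal{R}$, and $\Phi(E_{ij})$ has a nonzero component in the $D$-block but vanishing component in the $C$-block, so the commutation relation kills the cross entry $y_{(C,i),(D,i)}$. Combined with your within-block computation this yields $S'=\bigoplus_C\C\, I_{|C|}=A'$ in $B(H)$, hence $C^*(S)=S''=A''=A$. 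Note where maximality enters: it is exactly what produces the edge $(i,j)$ leaving $C$ but staying in $D$, and without that step the argument is incomplete.
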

\begin{proof}
Because $C^*(\Phi(E(\mathcal{R})^d))$ is a finite-dimensional $C^*$-algebra we have
\begin{equation}
    C^*(\Phi(E(\mathcal{R})^d)) \cong \bigoplus_{i=1}^N M_{n_i}(\C)
\end{equation}
for some integers $N$ and $n_i$. This implies that we have an injective unital $*$-homomorphism
\begin{equation}
    \gamma: \bigoplus_{i=1}^N M_{n_i}(\C)\to \bigoplus_{C\in \mathcal{C}} M_{|C|}(\C).
\end{equation}
Let $C\in\mathcal{C}$. Composing $\gamma$ with the projection $\pi_C:\bigoplus_{C\in \mathcal{C}} M_{|C|}(\C)\to M_{|C|}(\C)$, we obtain a ${*\text{-homomorphism}}$ $\tau_C:\bigoplus_{i=1}^N M_{n_i}(\C)\to M_{|C|}(\C)$. Since 
\begin{equation}
    \Phi(E(\mathcal{R})^d)\subseteq C^*(\Phi(E(\mathcal{R})^d))=\gamma\left(\bigoplus_{i=1}^N M_{n_i}(\C)\right)
\end{equation} 
and $\pi_C(\Phi(E(\mathcal{R})^d))=M_{|C|}(\C)$, we see that $\tau_C$ must be surjective. The general structure of unital $*$-homomorphisms between finite-dimensional $C^*$-algebras implies the existence of nonnegative integers $(a_{Ci})_{C\in\mathcal{C},\:  1\leq i\leq N}$ such that $|C|=\sum_{i=1}^n a_{Ci}n_{i}$ and $\tau_C\circ\gamma$ is unitarily equivalent to the direct sum $\id_{M_{n_{1}}(\C)}^{a_{C1}}\oplus \dots \oplus \id_{M_{n_{N}}(\C)}^{a_{CN}}$ for all $C\in\mathcal{C}$ (see Lemma III.2.1 in \cite{byExample}). Let $C\in\mathcal{C}$, then since $\tau_C$ is surjective we find that $\sum_{i=1}^n a_{Ci}=1$ and thus $a_{Ci}=1$ and $n_i=|C|$ for some $i$. Since $\gamma$ is injective, this implies that $N\leq |\mathcal{C}|$.

Suppose $N< |\mathcal{C}|$, then we can find $C,D\in\mathcal{C}$ and $k$ such that $C\neq D$, $a_{Ck}=a_{Dk}=1$ and $a_{Cj}=a_{Dj}=0$ for all $j\neq k$. This implies that $(\tau_C\oplus \tau_D)\circ \gamma$ has $n_k^2$-dimensional image, since 
\begin{equation}
((\tau_C\oplus \tau_D)\circ \gamma)\left(\bigoplus_{i=1}^N M_{n_i}(\C)\right)=((\tau_C\oplus \tau_D)\circ \gamma)(M_{n_k}(\C)). 
\end{equation}
However, upon examination of the image of $\Phi$ we see that $(\tau_C\oplus \tau_D)(\Phi(E(\mathcal{R})^d))$ is a subspace of the image of $(\tau_C\oplus \tau_D)\circ \gamma$ with greater dimension. This contradiction shows that we must have $N=|\mathcal{C}|$. Hence there is a bijective correspondence between the $n_i$ and the $C\in\mathcal{C}$, from which we deduce that the dimensions of $\bigoplus_{i=1}^N M_{n_i}(\C)$ and $\bigoplus_{C\in \mathcal{C}} M_{|C|}(\C)$ agree. This proves that $C^*(\Phi(E(\mathcal{R})^d)=\bigoplus_{C\in \mathcal{C}} M_{|C|}(\C)$.

\end{proof}
\begin{theorem}\label{thm:C*env}
If $\mathcal{R}$ is a tolerance relation on a finite set $X$ with chordal graph $G(\mathcal{R})$, then we have 
\begin{equation}
    \Cenv(E(\mathcal{R})^d)\cong \bigoplus_{C\in \mathcal{C}} M_{|C|}(\C).
\end{equation}
\end{theorem}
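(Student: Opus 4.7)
The plan is to leverage Theorem \ref{thm:main} together with Lemma \ref{lem:C*} to recognize $\Phi$ as a $C^*$-extension of $E(\mathcal{R})^d$ into $A := \bigoplus_{C \in \mathcal{C}} M_{|C|}(\C)$, and then to show that no nontrivial ideal of $A$ can be quotiented away without destroying the order-embedding property of the induced map. By the universal property of the $C^*$-envelope, this extension yields a surjective $*$-homomorphism $\psi: A \to \Cenv(E(\mathcal{R})^d)$ with $\psi \circ \Phi = \iota$, where $\iota$ is the canonical embedding. Since $A$ is a finite-dimensional direct sum of matrix algebras, every ideal has the form $I_S := \bigoplus_{C \in S} M_{|C|}(\C)$ for some $S \subseteq \mathcal{C}$, so $\Cenv(E(\mathcal{R})^d) \cong \bigoplus_{C \notin S} M_{|C|}(\C)$ where $\ker\psi = I_S$. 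The theorem reduces to showing $S = \emptyset$.

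Suppose for contradiction $S \neq \emptyset$ and pick $C_0 \in S$. Note that $S \neq \mathcal{C}$, for otherwise $\psi$ would be the zero map, contradicting unitality of the envelope. I claim there exists $x \in E(\mathcal{R})^d$ whose principal submatrix $(x_{ij})_{i,j \in C_0}$ is not positive semidefinite while $(x_{ij})_{i,j \in C}$ is positive semidefinite for every maximal clique $C \neq C_0$. Granting this, Theorem \ref{thm:main} yields $x \notin E(\mathcal{R})^d_+$. On the other hand, under the $*$-isomorphism $A/\ker\psi \cong \bigoplus_{C \notin S} M_{|C|}(\C)$, the element $\iota(x) = \psi(\Phi(x))$ corresponds to the tuple $((x_{ij})_{i,j \in C})_{C \notin S}$; since $C_0 \in S$, every index $C \notin S$ satisfies $C \neq C_0$, so each component is positive. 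Thus $\iota(x) \geq 0$, and since $\iota$ is a complete order embedding we conclude $x \in E(\mathcal{R})^d_+$, a contradiction. The whole argument hinges on producing such an $x$.

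The main obstacle is therefore the construction of $x$, which I carry out by a purely linear-algebraic choice. Set $m := \max\{|C \cap C_0| : C \in \mathcal{C},\, C \neq C_0\}$; maximality of distinct cliques forces $C_0 \not\subseteq C$ whenever $C \neq C_0$, so $m < |C_0|$. Define $x \in E(\mathcal{R})^d$ by $x_{ij} = (m I_{C_0} - J_{C_0})_{ij}$ for $i, j \in C_0$ and $x_{ij} = 0$ otherwise, where $J_{C_0}$ is the all-ones matrix indexed by $C_0$; this lies in $E(\mathcal{R})^d$ because $C_0$ is a clique. The spectrum of $m I_{C_0} - J_{C_0}$ is $\{m,\, m - |C_0|\}$ with $m - |C_0| < 0$, so $(x_{ij})_{i,j \in C_0}$ fails to be positive. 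For any $C \neq C_0$, the submatrix $(x_{ij})_{i,j \in C}$ is block diagonal with blocks $(mI - J)|_{C \cap C_0}$ (with spectrum $\{m,\, m - |C \cap C_0|\} \subseteq [0, \infty)$ by the choice of $m$) and a zero block on $C \setminus C_0$, and is therefore positive. Note that chordality is used only indirectly, via Theorem \ref{thm:main} and Lemma \ref{lem:C*}; the construction itself is insensitive to chordality, so all the graph-theoretic subtleties are isolated in those prior results.
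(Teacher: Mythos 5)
Your proof is correct, and although its first half coincides with the paper's, the way you extract the contradiction is genuinely different. Both arguments use Theorem \ref{thm:main} and Lemma \ref{lem:C*} to invoke the universal property, obtain the surjection $\psi:\bigoplus_{C\in\mathcal{C}}M_{|C|}(\C)\to\Cenv(E(\mathcal{R})^d)$, and reduce everything to showing that $\ker\psi$, which is necessarily of the form $\bigoplus_{C\in S}M_{|C|}(\C)$, is trivial. Given a clique $C_0\in S$, the paper passes to the operator subsystem $F\subseteq E(\mathcal{R})^d$ of matrices supported on $C_0\times C_0$, identifies $F\cong M_{|C_0|}(\C)$ as operator systems so that $\Cenv(F)=M_{|C_0|}(\C)$, and derives a contradiction from the fact that $M_{|C_0|}(\C)$ cannot arise as a quotient of the $C^*$-algebra generated by the image of $F$ in the surviving summands, which is a direct sum of strictly smaller matrix algebras. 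You instead produce an explicit Hermitian witness: $x=mI_{C_0}-J_{C_0}$ extended by zero, with $m=\max_{C\neq C_0}|C\cap C_0|<|C_0|$, which fails positivity on $C_0$ (eigenvalue $m-|C_0|<0$) but is positive on every other maximal clique (since $m\geq|C\cap C_0|$ and the restriction is block diagonal); killing the $C_0$-component makes $\iota(x)$ positive while $x\notin E(\mathcal{R})^d_+$, contradicting that $\iota$ is an order embedding. Your route is more elementary --- it uses only the first-level order embedding property of $\iota$ and the classification of ideals in finite-dimensional $C^*$-algebras, with a concrete, checkable certificate --- whereas the paper's route packages the same obstruction into a more structural statement about $C^*$-envelopes of subsystems. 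The small side conditions you need (that $S\neq\mathcal{C}$ so that some $C\neq C_0$ exists, and that $x$ is supported on a clique and hence lies in $E(\mathcal{R})^d$) are all handled correctly.
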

\begin{proof}
By the universal property of $C^*$-envelopes and Lemma \ref{lem:C*}, there must exist a surjective $*$-homomorphism
\begin{equation}
    \psi: \bigoplus_{C\in \mathcal{C}} M_{|C|}(\C)\to \Cenv(E(\mathcal{R})^d),
\end{equation}
such that $\psi\circ \Phi=\iota$, where $\iota:E(\mathcal{R}^d)\to \Cenv(E(\mathcal{R})^d)$ is a complete order embedding.
If $\psi$ is injective then we are done, so assume that $\psi$ is not injective. Then there must exist an isomorphism
\begin{equation}
    \xi:\Cenv(E(\mathcal{R})^d) \xrightarrow{\sim} \bigoplus_{C\in \mathcal{C}^\prime} M_{|C|}(\C)
\end{equation}
for some subset $\mathcal{C}^\prime\subsetneq \mathcal{C}$, such that $\xi\circ \iota=\Phi^\prime$, where $\Phi^\prime$ is given by
\begin{equation}
    \Phi^\prime: E(\mathcal{R})^d\to \bigoplus_{C\in \mathcal{C}^\prime} M_{|C|}(\C):(x_{ij})\mapsto ((x_{ij})_{i,j\in C})_{C\in\mathcal{C}^\prime}.
\end{equation}
As a composition of complete order embeddings, $\Phi^\prime$ is a complete order embedding. Pick $C\in\mathcal{C}\setminus \mathcal{C}^\prime$ and consider the operator system $F\subseteq E(\mathcal{R})^d$ where
\begin{equation}
    F=\{(x_{ij})\in E(\mathcal{R})\mid x_{ij}=0\text{ unless }i,j\in C\}.
\end{equation}
Clearly we have a canonical vector space identification $M_{|C|}(\C)\cong F$ and using Equation \ref{eq:thm2} it is not hard to see that this is also a complete order isomorphism. This implies that $\Cenv(F)=M_{|C|}(\C)$. Given $D\in \mathcal{C}^\prime$ let us denote by $\pi_D$ the projection $\pi_D:\bigoplus_{C\in \mathcal{C}^\prime} M_{|C|}(\C)\to M_{|D|}(\C)$. Using the definition of the map $\Phi^\prime$ we see that for all $D\in\mathcal{C}^\prime$ we have that $C^*(\pi_D(\Phi^\prime(F)))$ is isomorphic to a matrix algebra $M_n(\C)$ with $n<|C|$. This implies that $C^*(\Phi^\prime(F))$ is isomorphic to a direct sum of matrix algebras of the form $M_n(\C)$ with $n<|C|$. We conclude that $\Cenv(F)\cong M_{|C|}(\C)$ is a quotient of this direct sum of matrix algebras, however this is a contradiction. This proves that $\psi$ must have been injective after all.
\end{proof}

\begin{remark}
Note that we have in fact shown that the map $\Phi$ from Theorem \ref{thm:main} realizes the $C^*$-envelope of $E(\mathcal{R})$.
\end{remark}

\subsection{The propagation number}
Now that we know the $C^*$-envelope of $E(\mathcal{R})^d$ when $G(\mathcal{R})$ is chordal, we can compute its propagation number. To aid in doing so, let us develop some notation first.

If we assume that the graph $G(\mathcal{R})$ is chordal, then by Proposition \ref{prop:elimination} there exists a perfect elimination ordering for its vertices. Without loss of generality we will assume that $X=\{1,\dots,n\}$ and the usual ordering of the integers constitutes a perfect elimination ordering for $X$. If $C\in\mathcal{C}$ is a maximal clique, let $i\in C$ be the minimal element, then it is easy to see that $C=C[i]$. Let $i,j\in X$, then since every maximal clique is of the form $C[i]$, there exists a subset $\mathcal{I}_{ij}\subseteq X$, such that $\{C[k]\mid k\in\mathcal{I}_{ij}\}$ is precisely the set of maximal cliques that contain the edge $ij$ (if $i=j$, we take $\mathcal{I}_{ii}$ to be such that we obtain the set of all maximal cliques that contain the vertex $i$). Clearly $\mathcal{I}_{ij}$ is bounded from above by $\min(i,j)$.

Write $E_{ij}$ for the matrix units in $M_n(\C)$, then $E_{ij}\in E(\mathcal{R})^d\subseteq M_n(\C)$ if and only if $ij$ is an edge in $G(\mathcal{R})$. If $C\in\mathcal{C}$, then we index the entries of matrices in $M_{|C|}(\C)$ by $C$. Now given $C\in\mathcal{C}$ and $i,j\in X$ let the element $E^C_{ij}$ of $\bigoplus_{C\in\mathcal{C}}M_{|C|}(\C)$ be defined as follows. If $i\notin C$ or $j\notin C$, then $E_{ij}^C=0$. Otherwise, $E_{ij}^C$ is the element whose $M_{|D|}(\C)$ component is zero for all $D\in\mathcal{C}$ with $D\neq C$ and whose $M_{|C|}(\C)$ component is equal to the matrix unit $E_{ij}$ which has a one in the entry indexed by $ij$ and zeroes everywhere else. It follows that $\Phi(E_{ij})=\sum_{C\in\mathcal{C}} E_{ij}^C$. If $C=C[k]$ is a maximal clique, we also write $E^k_{ij}$ instead of $E^C_{ij}$.

\begin{theorem}\label{thm:prop}
Let $\mathcal{R}$ be a tolerance relation on a finite set $X$ with chordal graph $G(\mathcal{R})$. If the relation $\mathcal{R}$ is an equivalence relation, then $\prop(E(\mathcal{R})^d)=1$. Otherwise $\prop(E(\mathcal{R})^d)=2$.
\end{theorem}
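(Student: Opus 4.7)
The plan is to split by whether $\mathcal{R}$ is an equivalence relation. In the equivalence case the maximal cliques of $G(\mathcal{R})$ partition $X$, so the complete order embedding $\Phi$ of Theorem \ref{thm:main} is also surjective onto $\bigoplus_{C\in\mathcal{C}}M_{|C|}(\C)$; combined with Theorem \ref{thm:C*env} this identifies $E(\mathcal{R})^d$ with its own $C^*$-envelope, giving $\prop(E(\mathcal{R})^d)=1$.

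For the non-equivalence case, I would first show $\prop(E(\mathcal{R})^d)\geq 2$ by a dimension count. Failure of transitivity yields $(a,b),(b,c)\in\mathcal{R}$ with $(a,c)\notin\mathcal{R}$, so the maximal cliques containing $\{a,b\}$ and $\{b,c\}$ must be distinct and the vertex $b$ lies in at least two maximal cliques. A short counting argument then gives $\sum_{C\in\mathcal{C}}|C|^2 > n+2|\mathcal{E}|=\dim E(\mathcal{R})^d$, so $\Phi(E(\mathcal{R})^d)$ is a proper subspace of $\Cenv(E(\mathcal{R})^d)$ and $\prop\neq 1$.

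The substantive work is the bound $\prop\leq 2$: I must show that every matrix unit $E^l_{ij}$ of the $C^*$-envelope lies in $\Phi(E(\mathcal{R})^d)^{\circ 2}$. The key identity is that whenever $(i,k),(k,j)\in\mathcal{R}$, matrix units in different direct summands of $\bigoplus_{C\in\mathcal{C}}M_{|C|}(\C)$ annihilate, so
\begin{equation*}
\Phi(E_{ik})\Phi(E_{kj}) \;=\; \sum_{l\in\mathcal{M}:\,\{i,j,k\}\subseteq C[l]} E^l_{ij}.
\end{equation*}
Fixing $(i,j)\in\mathcal{R}$ and varying $k$ over $N(i)\cap N(j)$, together with $\Phi(E_{ij})$ itself, produces a family of vectors in $\C^{\mathcal{I}_{ij}}$; I would argue that this family spans all of $\C^{\mathcal{I}_{ij}}$, so each individual $E^l_{ij}$ with $l\in\mathcal{I}_{ij}$ is realized as a linear combination of products of at most two factors from $\Phi(E(\mathcal{R})^d)$.

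The main obstacle is this spanning property, which I would derive from the auxiliary lemma that in any chordal graph the characteristic vectors $\{\chi_C\in\C^X:C\in\mathcal{C}\}$ of the maximal cliques are linearly independent. A short induction using Proposition \ref{prop:elimination} does this: for the minimal $k\in\mathcal{M}$ with $\lambda_{C[k]}\neq 0$ in any putative dependence $\sum\lambda_{C[l]}\chi_{C[l]}=0$, the containment $C[l']\subseteq\{l',l'+1,\dots,n\}$ forces $k\notin C[l']$ whenever $l'>k$, so evaluating the relation at $k$ extracts $\lambda_{C[k]}=0$, a contradiction. Because $D\setminus\{i,j\}\subseteq N(i)\cap N(j)$ for every $D\in\mathcal{I}_{ij}$, the restrictions $\chi_D|_{N(i)\cap N(j)}$ augmented by an all-ones column retain full row rank $|\mathcal{I}_{ij}|$, which delivers the required spanning.
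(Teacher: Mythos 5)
Your proof is correct, and for the substantive step (the bound $\prop(E(\mathcal{R})^d)\leq 2$) it takes a genuinely different route from the paper, even though both start from the same product identity $\Phi(E_{ik})\Phi(E_{kj})=\sum_{l:\,\{i,j,k\}\subseteq C[l]}E^l_{ij}$. The paper restricts attention to $k\in\mathcal{I}_{ij}$ and exploits the perfect elimination ordering directly: since $C[p]\subseteq\{p,\dots,n\}$, the resulting system of vectors in $\C^{\mathcal{I}_{ij}}$ is unitriangular, and a short induction from the minimal $k\in\mathcal{I}_{ij}$ upward extracts each $E^k_{ij}$ explicitly. You instead let $k$ range over all of $N(i)\cap N(j)$ and reduce the spanning claim to an auxiliary lemma of independent interest, namely that the characteristic vectors of the maximal cliques of a chordal graph are linearly independent; your proof of that lemma is sound, and it is in effect the same triangularity phenomenon, repackaged. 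The one step you state tersely is why restricting the $\chi_D$ to coordinates in $N(i)\cap N(j)$ (plus one all-ones coordinate) preserves independence: the full justification is that each $D=C[l]$ with $l\in\mathcal{I}_{ij}$ satisfies $D\subseteq (N(i)\cap N(j))\cup\{i,j\}$, so the discarded coordinates of $\chi_D$ are all zero while the coordinates at $i$ and $j$ are identically $1$ across the family and are recovered by the appended all-ones column; with that spelled out the rank argument closes. Your approach buys a cleaner conceptual statement (a reusable independence lemma) and a fully explicit lower bound via the dimension count $\sum_C|C|^2>n+2|\mathcal{E}|$, which the paper leaves as ``easy to see''; the paper's induction buys an explicit formula expressing each $E^k_{ij}$ as a product plus previously constructed terms, with no need to invert a full rank system.
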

\begin{proof}
If $\mathcal{R}$ is an equivalence relation, then the maximal cliques of $G(\mathcal{R})$ are disjoint and it is easy to see that $\Phi:E(\mathcal{R})^d\to \bigoplus_{C\in\mathcal{C}}M_{|C|}(\C)$ is surjective. Since $\Cenv(E(\mathcal{R})^d)=\bigoplus_{C\in\mathcal{C}}M_{|C|}(\C)$, this means that $\prop(E(\mathcal{R})^d)=1$.

If $\mathcal{R}$ is not an equivalence relation, then there are some cliques in $\mathcal{C}$ which are not disjoint and it is easy to see that $\Phi$ is not surjective and hence $\prop(E(\mathcal{R})^d)>1$. It now suffices to show that every element of $\bigoplus_{C\in\mathcal{C}}M_{|C|}(\C)$ can be written as a linear combination of products of two elements in $\Phi(E(\mathcal{R})^d)$. Since $\bigoplus_{C\in\mathcal{C}}M_{|C|}(\C)$ is spanned by the elements of the form $E_{ij}^C$ it suffices to prove that the $E_{ij}^C$ can be written as such linear combinations. Let $i,j\in X$ be given. We will prove this by induction.

Let $k\in\mathcal{I}_{ij}$ be minimal, then 
\begin{equation}
    \Phi(E_{ik})\Phi(E_{kj})=\left(\sum_{C\in\mathcal{C}} E_{ik}^C\right)\left(\sum_{D\in\mathcal{C}} E_{kj}^D\right).
\end{equation}
Because $E_{ik}^C$ and $E_{kj}^D$ live in different components of $\bigoplus_{C\in\mathcal{C}}M_{|C|}(\C)$ if $C\neq D$, no cross terms appear in this product and
\begin{equation}
    \Phi(E_{ik})\Phi(E_{kj})=\sum_{C\in\mathcal{C}} E_{ik}^CE_{kj}^C.
\end{equation}
Since $k\in\mathcal{I}_{ij}$ is minimal, $C[k]$ is the only maximal clique containing $i,j$ and $k$ and thus this sum reduces to the single term $E^k_{ik}E_{kj}^k=E_{ij}^k$ and we find that
\begin{equation}
    E_{ij}^k=\Phi(E_{ik})\Phi(E_{kj}).
\end{equation}

Now let $k\in \mathcal{I}_{ij}$ and assume that for all $p\in\mathcal{I}_{ij}$ with $p<k$ we can write $E_{ij}^p$ as a linear combination of products of two elements of $\Phi(E(\mathcal{R})^d)$. We compute
\begin{align}
    \Phi(E_{ik})\Phi(E_{kj}) &= \sum_{C\in\mathcal{C}} E_{ik}^CE_{kj}^C \\
    &= E_{ik}^kE_{kj}^k+\sum_{p\in\mathcal{I}_{ij},p< k}E_{ik}^pE_{kj}^p\\
    &= E_{ij}^k + \sum_{p\in\mathcal{I}_{ij},p< k}E_{ij}^p.
\end{align}
Using the induction hypothesis we can now conclude that $E_{ij}^k$ can be written as a linear combination of products of two elements of $\Phi(E(\mathcal{R})^d)$.
\end{proof}
\begin{remark}
Note that for any $n\in \N$ there exists a chordal graph $G$ with $\diam(G)=n$. Hence the class of operator systems $E(\mathcal{R})$ with chordal graph $G(\mathcal{R})$ is a class of operator systems with arbitrary propagation numbers, however their duals always have propagation number at most two. In particular, this shows that if $\diam(G(\mathcal{R}))>2$, then $E(\mathcal{R})$ is not self-dual. By considering $C^*$-envelopes one sees that $E(\mathcal{R})$ is not self-dual even if $\diam(G(\mathcal{R}))=2$.
\end{remark}

\section{Band matrices}
In this section we will illustrate our results by applying them to operator systems of band matrices.

Let $X=\{1,\dots,n\}$, let $b\in\N$ be a bandwidth and let the tolerance relation $\mathcal{R}$ on $X$ be defined by $\mathcal{R}=\{(i,j)\in X\mid |i-j|<b\}$. The operator system $E_{n,b}=E(\mathcal{R})\subseteq M_n(\C)$ consists of band matrices with bandwidth $b$. Observe that the usual ordering of the integers constitutes a perfect elimination ordering of the graph $G(\mathcal{R})$ and hence this graph is chordal. This means that our results are applicable to the operator system $E_{n,b}$.

\begin{description}
        \item[A concrete realization:] Let us first use our main result to find a concrete realization of $E_{n,b}^d$. It is not hard to see that the set $\mathcal{C}$ of maximal cliques of $G(\mathcal{R})$ is given by \sloppy{${\mathcal{C}=\{C[i]\subseteq X\mid 1\leq i\leq n-b+1 \}}$}. Theorem \ref{thm:main} now implies that the map
    \begin{equation}
        \Phi: E_{n,b}^d\to \bigoplus_{k=1}^{n-b+1} M_b(\C):(x_{ij})\mapsto ((x_{ij})_{k\leq i,j\leq k+b-1})_{1\leq k\leq n-b+1}
    \end{equation}
    is a complete order embedding.
    \begin{figure}[b]
        \centering
        \[
        \begin{pmatrix}
        * & * & & \\ \cline{2-3}
        * & \multicolumn{1}{|c}{*} & \multicolumn{1}{c|}{*} & \\
         & \multicolumn{1}{|c}{*} & \multicolumn{1}{c|}{*} & * \\ \cline{2-3}
         & & * & * & *\\
         & & & * & *
        \end{pmatrix}
        \]
        \caption{The map $\Phi$ essentially picks out all the $b\times b$ blocks along the diagonal of a matrix in $E_{n,b}$}
    \end{figure}
    
    \item[The $C^*$-envelope:] By Theorem \ref{thm:C*env} the map $\Phi$ in fact realizes the $C^*$-envelope \sloppy{${\Cenv(E_{n,b}^d)=\bigoplus_{k=1}^{n-b+1} M_b(\C)}$} of $E_{n,b}^d$.
    
    \item[The propagation number:] If $b=1$ or $b=n$ the relation $\mathcal{R}$ is an equivalence relation and Theorem \ref{thm:prop} gives us that the map $\Phi$ is in fact surjective and $\prop(E_{n,b}^d)=1$. Otherwise we have $\prop(E_{n,b}^d)=2$. 
    
    \item[Extremal rays:] Finally, we can use Theorem \ref{thm:extremal} to describe the extremal rays of $(E^d_{n,b})_+$ and hence the pure states of $E_{n,b}$. Let $0\neq (x_{ij})\in (E_{n,b}^d)_+$. Since two vertices $i,j\in X$ are joined by an edge in $G(\mathcal{R})$ if and only if they are close enough together in the sense that $|i-j|<b$, one sees that the graph $H$ from Theorem \ref{thm:extremal} is connected if and only if there are not $b$ or more consecutive zeroes on the diagonal of $(x_{ij})$ in between any two nonzero diagonal entries. If this graph $H$ is connected then $(x_{ij})$ generates an extremal ray if and only if all components of $\Phi((x_{ij}))$ are of rank one or zero. In this case, as observed in Remark \ref{rmk:projection}, the matrix $(x_{ij})$ is the projection of some positive rank one matrix $(y_{ij})\in M_n(\C)_+$. The matrix $(y_{ij})$ must be a rank one projection and hence it can be written $(y_{ij})=vv^*$ for some vector $v\in\C^n$. In order for the matrix $H$ to be connected, we see that this vector $v$ cannot have a sequence of $b$ or more consecutive zero entries in between two nonzero entries of $v$. In this sense we see that extremal rays of $(E^d_{n,b})_+$ correspond to vectors $v\in \C^n$ that do not contain too many consecutive zeroes.
\end{description}

\bibliographystyle{plain}
\bibliography{references.bib}

\end{document}